\newtheorem{theorem}{Theorem}[section]
\newtheorem{lemma}[theorem]{Lemma}
\newtheorem{proposition}[theorem]{Proposition}
\theoremstyle{definition}
\newtheorem{definition}[theorem]{Definition}
\newtheorem{assumption}[theorem]{Assumption}
\theoremstyle{remark}
\newtheorem{remark}[theorem]{Remark}
\DeclarePairedDelimiter{\norm}{\|}{\|}
\newcommand*\diff{\mathop{}\!\mathrm{d}}
\newcommand{\ndg}[1]{| \kern -.25mm \|{#1}| \kern -.25mm \|}
\title[{\emph{A posteriori} error estimates for the Allen-Cahn problem}]{\emph{A posteriori} error estimates \\ for the Allen-Cahn problem}
\author{Konstantinos Chrysafinos}
\address{(K.~Chrysafinos) 1) Department of Mathematics, School of Mathematical and Physical Sciences, National Technical University of Athens, Zografou 15780, Greece and 2) IACM, FORTH, 20013 Heraklion, Crete, Greece.}
	\email{chrysafinos@math.ntua.gr}
\author{Emmanuil H.~Georgoulis}\address{(E.~H.~Georgoulis) 1) Department of Mathematics, University of Leicester, Leicester LE1 7RH, UK, 2) Department of Mathematics, School of Mathematical and Physical Sciences, National Technical University of Athens, Zografou 15780, Greece and 3) IACM, FORTH, 20013 Heraklion, Crete, Greece.}
  \email{Emmanuil.Georgoulis@le.ac.uk}
\author{Dimitra Plaka}\address{(D.~Plaka) Department of Mathematics, School of Mathematical and Physical Sciences, National Technical University of Athens, Zografou 15780, Greece.}
  \email{dplaka@central.ntua.gr}
\begin{document}

\maketitle

\begin{abstract}
 This work is concerned with the proof of \emph{a posteriori} error estimates for fully-discrete Galerkin approximations of the Allen-Cahn equation in two and three spatial dimensions. The numerical method comprises of the backward Euler method combined with conforming finite elements in space. For this method, we prove conditional type \emph{a posteriori} error estimates in the $L^{}_4(0,T;L^{}_4(\Omega))$-norm that depend polynomially upon the inverse of the interface length $\epsilon$. The derivation relies crucially on the availability of a spectral estimate for the linearized Allen-Cahn operator about the approximating solution in conjunction with a continuation argument and a variant of the elliptic reconstruction. The new analysis also appears to improve variants of known \emph{a posteriori} error bounds in $L_2(H^1)$, $L_\infty^{}(L_2^{})$-norms in certain regimes.
\end{abstract}



\section{Introduction}
The Allen-Cahn problem comprises of a singularly perturbed parabolic semi-linear parabolic partial differential equation (PDE) together with suitable initial and boundary conditions, viz.,
\begin{equation}\label{1.1}
\begin{aligned}
   u_t^{} - \Delta u + \frac{1}{{\epsilon}^2}(u^3 -u) & = f  & \text{in} & \ \Omega \times (0,T], \\ 
   u &= 0   & \text{on} & \ \partial\Omega \times (0,T], \\
   u(\cdot,0)&= u_0^{}   & \text{in} & \ \Omega ;
\end{aligned}
\end{equation}
  we assume that $\Omega\subset \mathbb{R}^d$ is a convex, polygonal ($d=2$) or polyhedral ($d=3$) domain of the Euclidean space $\mathbb{R}^d$, $T \in \mathbb{R}^{+} $, $0 < \epsilon \ll 1$, for sufficiently smooth initial condition $u_0^{}$ and forcing function $f$ (precise regularity statements will be given below). 

The  problem \eqref{1.1} belongs to the class of the so-called phase field PDEs models for solidification of a pure material, originally introduced by Allen \& Cahn \cite{Allen-Cahn1979} to describe  the phase separation process of a binary alloy at a fixed temperature. The nonlinear function $F(u):=u^3 -u$ is the derivative of the classical \emph{double-well potential} $\int F(u){\rm d} u$.
Due to the nature of the non-linearity, the  solution $u$ develops time-dependent interfaces $\Gamma^{}_t : = \lbrace x \in \Omega : u(x,t) = 0 \rbrace$, separating regions for which $u\approx  1$ from regions where $u\approx -1$. The solution moves from one region to another within the, so-called, \textit{diffuse interfaces} of length $O(\epsilon)$. For a recent comprehensive review of phase field models and their relationship to geometric flows, we refer to \cite{Du-Feng2019}. 

Realistically, $\epsilon$ should be orders of magnitude smaller than the physical domain of simulation. Therefore, the accurate and efficient numerical solution of such phase field models requires the resolution of the dynamic diffuse interfaces. This means that the discretisation parameters  of any numerical method used should provide sufficient numerical resolution to approximate the interface evolution accurately.  In the context of finite element methods, this is typically achieved via the use of very fine meshes in the vicinity of the interface region. In an effort to simulate at a tractable computational cost, especially for $d=3$, it is essential to design adaptive algorithms which are able to dynamically modify the local mesh size.

A standard error analysis of finite element approximations of \eqref{1.1} leads to \emph{a priori} estimates with unfavourable exponential dependence on $\epsilon^{-1}$. This is impractical even for moderately small interface length $\epsilon$. The celebrated works \cite{Chen1994,Mottoni-Schatzman1995,Alikakos-Fusco1993}  showed that uniform  bounds for the principal eigenvalue of the linearized Allen-Cahn spatial operator about the solution $u$
are possible as long as the evolving interface is smooth (cf., \eqref{spectral_classical} below). Such spectral estimates are used in the seminal work \cite{Feng-Prohl2003} whereby \emph{a priori} error bounds with only polynomial dependence on $\epsilon^{-1}$ for finite element methods have been proven, enabling also the proof of convergence to the sharp-interface limit. Moreover, assuming the validity of a spectral estimate about the exact solution $u$, allowed the proof of the first conditional-type \emph{a posteriori} error bounds for finite element methods approximating the Allen-Cahn problem in $L_2(H^1)$-norm, for which the condition depends only polynomially on $\epsilon$; this was presented in the influential works 
\cite{Kessler-Nochetto-Schmidt2004,Feng-Wu2005}. 

This direction of research has taken a further leap forward with the seminal works \cite{Bartels2005,Bartels-Muller2011,Bartels-Muller-Ortner2011}, whereby the principle eigenvalue of the linearized spatial Allen-Cahn operator \emph{about the numerical solution} $U_h$ is used instead, in an effort to arrive to fully computable \emph{a posteriori} error estimates in $L_2(H^1)$- and $L_{\infty}(L_2)$-norms, the latter using the elliptic reconstruction framework \cite{Makridakis-Nochetto2003,Lakkis-Makridakis2006}. 
We also mention \cite{Georgoulis-Makridakis2013} whereby \emph{a posteriori} error bounds in the $L^{}_{\infty}(L_r)$-norms, $r\in [2,\infty]$ are proven.

When the interface $\Gamma^{}_t$ undergoes topological changes, however, e.g., when an interface collapses, unbounded velocities occur and the all-important principal eigenvalue $\lambda$ can scale like $\lambda \sim \epsilon^{-2}$ on a time interval of length comparable to $\epsilon^2$. This crucial observation, made in \cite{Bartels-Muller-Ortner2011}, showed that the principal eigenvalue can be assumed to be $L_1$-integrable with respect to the time variable allowing, in turn, for robust conditional \emph{a posteriori} error analysis under topological changes in $L_2(H^1)$- and $L_{\infty}(L_2)$-norms.

 In a recent work \cite{Chrysafinos2019}, \emph{a priori} bounds for the $L^{}_4(L_4)$-norm error have been proved, which appear to deliver a rather favourable $\epsilon^{-1}$-polynomial dependence on the respective constant, noting that $L_4(L_4)$-norm is present in the stability of the spatial Allen-Cahn operator upon multiplication of \eqref{1.1} by $u$ and integration with respect to space and to time. An immediate question is whether proving conditional \emph{a posteriori} error bounds in $L_4(L_4)$-norm norm can also improve the dependence of the condition on the interface length $\epsilon$.
Motivated by this, in this work, we prove conditional \emph{a posteriori} error bounds for the $L_4(L_4)$-norm for a backward Euler-finite element method. The proof is valid under the hypothesis of the existence of a spectral estimate under topological changes in the spirit of \cite{Bartels-Muller-Ortner2011}. The argument uses a carefully constructed test function,  in conjunction with a continuation argument and a new variant of the elliptic reconstruction introduced in \cite{georgoulis_lakkis_wihler}. As a result of the method of proof, the new \emph{a posteriori} error analysis provides also new $L_{\infty}(L_2)$- and $L_2(H^1)$-norm \emph{a posteriori} error bounds which appear to, at least formally, be valid under less stringent smallness condition compared to results from the literature.

The remainder of this work is structured as follows. The model problem is introduced In Section \ref{sec:model}. Section \ref{sec:recon} include the definition of the numerical method along with the elliptic and time reconstructions needed for the proof of the main results. The key estimates and the main result are stated and proven in Section \ref{sec:apost}. Section \ref{sec:fully_computable} completes the derivation of fully computable error bounds by estimating the terms appearing in the residuals of the main results.


\section{Model problem}\label{sec:model}


We denote by $L_{p}^{}(\Omega)$, $1 \leq p \leq \infty$ the standard Lebesgue spaces with corresponding norms $\norm{ \cdot }^{}_{L_{p}^{}(\Omega)}$. Let also  $W^{k,p}(\Omega)$ is the $k$th order of Sobolev space based on $L_{p}^{}(\Omega)$ and $H^{k}(\Omega):= W^{k,2}(\Omega)$, $k \geq 0$, along with the corresponding norms $\norm{ \cdot }^{}_{W^{k,p}(\Omega)}$ and $\norm{ \cdot}^{}_{H^{k}(\Omega)}$, respectively. Set $H_{0}^{1}(\Omega) := \lbrace v \in H_{}^{1}(\Omega) : v \vert^{}_{\partial \Omega} = 0 \rbrace$. We shall denote by $\langle \cdot, \cdot \rangle$ the duality pairing between $H^{-1}(\Omega)$ and $H^{1}_{0}(\Omega)$, which becomes the standard $L^{}_2(\Omega)$ inner product $\left( \cdot, \cdot \right)$ when the arguments are sufficiently smooth.  The respective Bochner spaces are denoted by $L^{}_{p}(0,T;V)$, endowed with the norms:
\begin{align*}
    \norm{ v }^{}_{ L^{}_{p}(0,T;V) } = \Big( \int_{0}^{T} \norm{v}^{p}_{V} \diff t \Big)^{1/p},\ p\in[1,+\infty),\quad  \norm{ v }^{}_{ L^{}_{\infty}(0,T;V) } = \operatorname*{ess.sup}_{t\in[0,T]} \norm{v}^{}_{V},
\end{align*}
with $V$ a Banach space with norm $\norm{\cdot}_{V}$. 

We shall make extensive use of the classical \textit{Gagliardo-Nirenberg-Ladyzhenskaya inequalities (GNL)} reading: 
\begin{align}\label{eq:GNL}
\norm{ v}^{}_{L^{}_4(\Omega)} & \leq \tilde{c} \norm{ v}^{1/2} _{L^{}_2(\Omega)} \norm{ \nabla v}^{1/2} _{L^{}_2(\Omega)}, \ \text{for } d=2, \\
\norm{ v}^{}_{L^{}_4(\Omega)} & \leq \tilde{c} \norm{ v}^{1/4} _{L^{}_2(\Omega)} \norm{ \nabla v}^{3/4} _{L^{}_2(\Omega)}, \  \text{for }d=3,
\end{align}
for all $v \in H^{1}_{0}(\Omega)$ with $\tilde{c}>0$, independent of $v$. For later use, we also recall a basic algebraic estimate, often referred to as the \textit{Young's inequality}: for any  $\delta > 0 $, we have
\begin{align*}
a b \leq \delta a^p + C(p,q) \delta^{-\frac{q}{p}} b^{q}, \quad  \text{where} \quad  1/p  +  1/ q  = 1,
\end{align*}
for any $a,b \geq 0$  and $p,q > 1$, for some $C(p,q)>0$ independent of $a,b$. 

Let $f \in L^{}_\infty(0,T; L^{}_4(\Omega))$ and $u^{}_0 \in L^{}_{\infty}(\Omega)$. Then, for a.e. $t \in (0,T]$, we seek $u \in L^{}_2(0,T; H_0^1(\Omega)) \cap H^1 ( 0,T; H^{-1}(\Omega)) $, such that
\begin{equation}\label{2.1}
\begin{aligned}
    \langle u_t^{}(t),v \rangle + \left( \nabla u(t), \nabla v \right) + \epsilon^{-2} \left( u^3(t) - u(t),v \right) & = \langle f(t), v \rangle ,\\
    ( u(0),v )  & = ( u_0^{},v),
\end{aligned}
\end{equation}
for all $v \in H_0^1(\Omega).$ Integrating for $t\in(0,T]$, and integrating by parts the above becomes: find  $u \in L^{}_2(0,T; H_0^1(\Omega)) \cap L^{}_{\infty} ( 0,T; L^{}_2(\Omega)) $, such that
\begin{equation}\label{2.2}
\begin{aligned}
    & ( u(T),v(T) ) + \int_{0}^{T} \Big( - \langle u,v^{}_t \rangle  +( \nabla u, \nabla v )  + \epsilon^{-2} ( u^3 - u,v ) \Big) \diff t \\
    =&\ (u^{}_0,v(0) ) + \int_{0}^{T} \langle f,v \rangle \diff t,
\end{aligned}
\end{equation}
for all $v \in L^{}_2(0,T; H_0^1(\Omega)) \cap H^1 ( 0,T; H^{-1}(\Omega)) $.

\section{The fully discrete scheme and reconstructions}\label{sec:recon} We shall first present  a fully discrete scheme for the Allen-Cahn problem \eqref{1.1} by combining the lowest order discontinuous Galerkin time-stepping  method with conforming finite elements in space. Further, we shall define suitable space and time reconstructions of the fully discrete scheme, which will be crucial for the proof of the \emph{a posteriori} error bounds below.


\subsection{Discretisation} 
Let $0=t^{}_0 < t^{}_1 < \ldots < t^{}_N =T$. We partition the time interval $[0,T]$ into subintervals $J^{}_n := ( t^{}_{n-1}, t^{}_n ]$ and we denote by $k^{}_n := t^{}_n - t^{}_{n-1}$, $n=1,\ldots,N$ each time step.



Let also $\{\mathcal{T}^{n}_h\}_{n=0}^N$ be a sequence of conforming and shape-regular triangulations of the domain $\Omega$, that are allowed to be modified between time steps. 
We define the meshsize function, $h^{}_n : \Omega \rightarrow \mathbb{R}$, by $
h^{}_n(x):= \text{diam}(\tau)$,  $x \in \tau$ for  $\tau \in   \mathcal{T}^{n}_h$.
To each $\mathcal{T}^n_h$ we associate the finite element space:
$$V^{n}_h := \lbrace \chi \in C(\bar{\Omega}) ; \ \chi \vert^{}_{\tau} \in \mathbb{P}^{}_{\kappa}(\tau) , \ \forall \tau \in \mathcal{T}^{}_h \rbrace,$$
with $\mathbb{P}^{}_{\kappa}$ denoting the $d$-variate space of polynomials of degree at most $\kappa \in \mathbb{N}$. The whole theory presented below remains valid if box-type elements are used and respective polynomial spaces of degree $\kappa$ on each variable. 

We say that a set of triangulations is \textit{compatible} when they are constructed by different refinements of the same (coarser) triangulation. 
Given two compatible triangulations $\mathcal{T}^{n-1}_{h}$ and $\mathcal{T}^{n}_{h}$,
we consider their \textit{finest common coarsening}  $\hat{\mathcal{T}}^{n}_h:= \mathcal{T}^{n}_h \wedge \mathcal{T}^{n-1}_{h}$ and set $\hat{h}^{}_n := \max(h^{}_n,h^{}_{n-1})$. The partial order relation among the triangulations implies that $V^{n-1}_{h} \subset V^{n}_{h}$. Furthermore, we  denote by $\mathcal{S}^{n}_h$ the interior mesh skeleton of $\mathcal{T}^n_h$, and we define the sets $\hat{\mathcal{S}}^{n}_h := \mathcal{S}^{n}_h \cap \mathcal{S}^{n-1}_{h}$
and $\check{\mathcal{S}}^{n}_h := \mathcal{S}^{n}_h \cup \mathcal{S}^{n-1}_{h}.$


Approximations will be subordinate to the time partition. A finite element space $V^{n}_{h} \subset H^{1}_0(\Omega)$ is specified on each time interval $J^{}_n$, $n=1,\ldots,N$. Then, we seek approximate solutions from the space
\begin{align*}
   V^{n}_{hk} := \left\{  X :[0,T] \rightarrow V^{n}_h ; X \in L^{}_2(0,T;H^1_0(\Omega)) ; \ X \vert^{}_{J^{}_n} \in \mathbb{P}^{}_0\big[ J^{}_n ; V^{n}_{h} \big] \right\},
\end{align*}
with $\mathbb{P}^{}_0\big[ J^{}_n ; V^{}_{h} \big]$ denoting the space of constant polynomials over $J_n^{}$, having values in $V^n_{h}$; these functions are allowed to be discontinuous at the nodal points, but are taken to be continuous from the left.


\subsection{Fully discrete scheme} For brevity, we set $F(v) := v^3 -v $. The backward Euler-finite element method reads: for each $n=1,\ldots,N$, find $U^{n}_h \in V^n_{hk}$,  such that
\begin{equation}\label{3.2}
\begin{aligned}
    k_n^{-1} \left( U^n_h - U^{n-1}_h, X \right) + \left( \nabla U_h^n, \nabla X \right) + \epsilon^{-2} \left( F(U_h^n), X \right) & = \langle f^n,X \rangle, \\
    U_h^0 & = \mathcal{P}_h^0 u^0,
\end{aligned}
\end{equation}
 for every $X \in V^n_{hk}$, 
with  $f^n:= f(t^{}_n)$ 
and $\mathcal{P}^{n}_h$ denoting the orthogonal $L^{}_2$-projection operator onto $V^{n}_{hk}$.


Let now $\Delta^n_h  : V^n_{hk} \rightarrow V^n_{hk}$ defined by $ \left( - \Delta^n_h V, X \right) = \left( \nabla V , \nabla X  \right)$, for all $ V,X \in V^n_{hk}$, i.e., the discrete Laplacian. This allows for the strong representation of \eqref{3.2} as
\begin{equation}\label{3.3}
\begin{aligned}
    k^{-1}_n \left( U_h^{n} - \mathcal{P}^n_{h} U^{n-1}_h \right) - \Delta^n_h U^n_h + \epsilon^{-2} \  \mathcal{P}^n_h F(U^n_h) = \mathcal{P}^n_h f^n.
\end{aligned}
\end{equation}

We now introduce a variant of the elliptic reconstruction \cite{Makridakis-Nochetto2003,Lakkis-Makridakis2006, georgoulis_lakkis_wihler}, which will be instrumental in the proof of the \emph{a posteriori} error bounds below.
\begin{definition}[elliptic reconstruction]\label{def_ellrec} 
	For each $ n= 0,1,\ldots,N,$ we define the elliptic reconstruction $ \omega^n \in H^{1}_0(\Omega)$  to be the solution of the elliptic problem
\begin{equation}\label{4.3}
\begin{aligned}
   \left( \nabla \omega^n, \nabla v \right) = \langle g_h^{n}, v \rangle, \  \ \text{ for all} \ v \in H_0^1(\Omega),
\end{aligned}
\end{equation}
where
\begin{equation}\label{g_h^n}
\begin{aligned}
g^n_h :& = -\Delta_h^{n}U_h^{n} - {\epsilon}^{-2} \left(F(U_h^{n}) -\mathcal{P}_h^n F(U_h^{n}) \right) - \mathcal{P}_h^n f^n + f^n \\
& \quad \quad -  k^{-1}_n \left( \mathcal{P}^n_h  U^{n-1}_h - U^{n-1}_h  \right).
\end{aligned}
\end{equation}
\end{definition}

\begin{remark}[Galerkin orthogonality] \label{remark_GO} We observe that $\omega^n$ satisfies
\begin{align}\label{4.4}
    \left( \nabla ( \omega^n -U^{n}_h \right) , \nabla X ) = 0,  \ \ \text{ for all} \ X \in V^n_{hk}.
\end{align}
This relation implies that $\omega^n - U^{n}_h$ is orthogonal to $V^n_{hk}$ with respect to the Dirichlet inner product, a crucial property that allows to use \emph{a posteriori} error bounds for elliptic problems to estimate various norms of $\omega^n - U^{n}_h$ from above; we refer to Section \ref{sec:fully_computable} for a detailed discussion.
\end{remark}

\begin{definition}[time reconstruction]\label{def_trec} 
	For $ t \in J_n$, $n=1,\ldots,N$, we set
\begin{equation}\label{4.5}
    U^{}_h(t)  := \ell^{}_{n-1}(t) U^{n-1}_h + \ell^{}_{n}(t) U^n_h ,\quad\text{and}\quad
    \omega(t)  := \ell^{}_{n-1}(t) \omega^{n-1} + \ell^{}_{n}(t) \omega^n,
\end{equation}
where $\ell^{}_n$ the piecewise linear Lagrange basis function with $\ell_n^{}(t_k)=\delta_{kn}$. 
\end{definition}
The above definition implies that the time derivative of $U^{}_h$,
\begin{align}\label{4.6}
U^{}_{h,t}(t) = \frac{U^n_h - U^{n-1}_h}{k_n},
\end{align}
is the discrete backward difference at $t^{}_n$.


\section{\emph{A posteriori} error estimates}\label{sec:apost}

We shall now use the reconstructions defined above, together with non-standard energy and continuation arguments and a spectral estimate for the linearized steady-state problem about the approximate solution $U_h$, to arrive at \emph{a posteriori} error bounds in the $L_4(L_4)$-, $L_2(H^1)$- and $L_{\infty}(L_2)$-norms. 

\subsection{Error relation} We begin by splitting the total error as follows:
\begin{equation}\label{error-representation}
    e:=  u  - U_h^{}=\theta -  \rho, \quad \text{where}\quad 
       \theta := \omega - U_h^{}, \ \rho : = \omega -u.
\end{equation}
In view of Remark \ref{remark_GO}, $\theta$ can be estimated by \emph{a posteriori} error bounds for elliptic problems in various norms. 

Also, $\rho$ satisfies an equation of the form \eqref{2.1} with a fully computable right-hand side that consists of  $\theta$  and the problem data.  To see this, \eqref{2.1} along with Definitions \ref{def_ellrec} and \ref{def_trec} and elementary manipulations lead to the following result.
\begin{lemma}[error equation] On $J^{}_n$, $n = 1,\ldots,N$ and for all $v \in H_0^1(\Omega)$, we have
\begin{equation}\label{4.7}
\begin{aligned}
    &\ \langle \rho_t^{},v \rangle   +   \left( \nabla \rho, \nabla v \right)  +  {\epsilon}^{-2}  \left( F(U_h^{})- F(u), v \right) \\
    =&\ \langle f^n - f,v \rangle + \langle \theta^{}_t , v \rangle  + \epsilon^{-2} \left( F(U^{}_h) - F(U^n_h), v \right) + \left( \nabla(\omega -\omega^n) , \nabla v \right) .
\end{aligned}
\end{equation}
\end{lemma}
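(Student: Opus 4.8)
The plan is to derive \eqref{4.7} directly from the weak formulation \eqref{2.1} by subtracting a suitable identity for the reconstructed quantities and rearranging. First I would start from the weak form \eqref{2.1} for the exact solution $u$ on the interval $J_n$: for all $v\in H_0^1(\Omega)$,
\[
\langle u_t,v\rangle + (\nabla u,\nabla v) + \epsilon^{-2}(F(u),v) = \langle f,v\rangle .
\]
Since $\rho = \omega - u$, I would write $\langle \rho_t,v\rangle = \langle \omega_t,v\rangle - \langle u_t,v\rangle$ and $(\nabla\rho,\nabla v) = (\nabla\omega,\nabla v) - (\nabla u,\nabla v)$, so that the left-hand side of \eqref{4.7} becomes
\[
\langle \omega_t,v\rangle + (\nabla\omega,\nabla v) + \epsilon^{-2}(F(U_h)-F(u),v) - \big(\langle u_t,v\rangle + (\nabla u,\nabla v)\big) + \epsilon^{-2}(F(u),v) - \epsilon^{-2}(F(u),v),
\]
and after substituting the $u$-equation the troublesome $u$-terms collapse to $-\langle f,v\rangle$, leaving me to show
\[
\langle \omega_t,v\rangle + (\nabla\omega,\nabla v) + \epsilon^{-2}(F(U_h),v) - \langle f,v\rangle
\]
equals the right-hand side of \eqref{4.7}.

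The second step is to evaluate $\langle\omega_t,v\rangle + (\nabla\omega,\nabla v)$ using the time reconstruction. By Definition \ref{def_trec}, on $J_n$ we have $\omega(t) = \ell_{n-1}(t)\omega^{n-1} + \ell_n(t)\omega^n$, so $\omega_t = (\omega^n-\omega^{n-1})/k_n$, and similarly $U_h(t) = \ell_{n-1}(t)U_h^{n-1} + \ell_n(t)U_h^n$ gives $\theta_t = U_{h,t} = (U_h^n - U_h^{n-1})/k_n$ by \eqref{4.6} together with $\omega_t = \theta_t$ (which holds since $\omega - U_h = \theta$ and both reconstructions are affine on $J_n$ with the same nodal increments up to the elliptic-reconstruction shift — more precisely $\omega_t - U_{h,t} = ((\omega^n - U_h^n)-(\omega^{n-1}-U_h^{n-1}))/k_n = \theta_t$ by definition of $\theta$ at the nodes). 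For the Dirichlet term I would split $(\nabla\omega,\nabla v) = (\nabla\omega^n,\nabla v) + (\nabla(\omega-\omega^n),\nabla v)$ and invoke the defining relation \eqref{4.3} for $\omega^n$, namely $(\nabla\omega^n,\nabla v) = \langle g_h^n,v\rangle$, where $g_h^n$ is given by \eqref{g_h^n}. Expanding $\langle g_h^n,v\rangle$ and using $(-\Delta_h^n U_h^n,X) = (\nabla U_h^n,\nabla X)$ only makes sense against discrete test functions, so I would instead keep $g_h^n$ in the form \eqref{g_h^n} and add $\epsilon^{-2}(F(U_h),v)$ to it: the term $-\epsilon^{-2}(F(U_h^n) - \mathcal{P}_h^n F(U_h^n),v)$ combines with $\epsilon^{-2}(F(U_h),v)$; and crucially the strong form \eqref{3.3} identifies $-\Delta_h^n U_h^n + \epsilon^{-2}\mathcal{P}_h^n F(U_h^n) - \mathcal{P}_h^n f^n + k_n^{-1}(U_h^n - \mathcal{P}_h^n U_h^{n-1}) = 0$, which I would use to cancel the bulk of $g_h^n$ against the discrete backward-difference term $k_n^{-1}(U_h^n - U_h^{n-1})$.

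The bookkeeping is the only real work here: after the cancellations one is left precisely with $\langle f^n - f,v\rangle + \langle\theta_t,v\rangle + \epsilon^{-2}(F(U_h) - F(U_h^n),v) + (\nabla(\omega-\omega^n),\nabla v)$, which is the claimed right-hand side. The main obstacle — really the only subtle point — is handling the projection terms carefully: $g_h^n$ mixes projected and unprojected quantities ($\mathcal{P}_h^n F(U_h^n)$ versus $F(U_h^n)$, $\mathcal{P}_h^n f^n$ versus $f^n$, $\mathcal{P}_h^n U_h^{n-1}$ versus $U_h^{n-1}$), and one must track which of these cancel against \eqref{3.3} (which lives at the discrete level) and which survive, using that \eqref{4.3} is tested against \emph{all} of $H_0^1(\Omega)$ while \eqref{3.3} is an identity in $V_{hk}^n$. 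The point of the particular choice of $g_h^n$ in Definition \ref{def_ellrec} is exactly that, when combined with $\epsilon^{-2}F(U_h)$ and the continuous-time backward difference $k_n^{-1}(U_h^n - U_h^{n-1})$, all mesh-dependent and projection-dependent artifacts disappear and one is left with the three clean residual contributions plus $\langle\theta_t,v\rangle$; I would present this as a short chain of equalities rather than a long computation. Finally I would note that $\rho \in H^1(J_n;H^{-1}(\Omega))$ on each subinterval (since $\omega$ is affine in time and $u\in H^1(0,T;H^{-1}(\Omega))$), so $\langle\rho_t,v\rangle$ is well defined and \eqref{4.7} holds for a.e.\ $t\in J_n$.
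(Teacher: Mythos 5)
Your derivation is correct and is essentially the paper's own (unwritten) argument: the paper dismisses the lemma as ``elementary manipulations'' of \eqref{2.1} with Definitions \ref{def_ellrec} and \ref{def_trec}, and your chain --- subtract the $u$-equation, split $(\nabla\omega,\nabla v)=(\nabla\omega^n,\nabla v)+(\nabla(\omega-\omega^n),\nabla v)$, use \eqref{4.3}, and collapse $g_h^n$ via \eqref{3.3} to $k_n^{-1}(U_h^{n-1}-U_h^n)-\epsilon^{-2}F(U_h^n)+f^n$ so that the backward difference cancels against $U_{h,t}$ --- is exactly the intended computation (the same simplification of $g_h^n$ the paper uses in Lemma \ref{lem:ellrec_bound}). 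The only blemish is the garbled sentence claiming ``$\theta_t=U_{h,t}$'' and ``$\omega_t=\theta_t$''; the relation you actually need and do state correctly in the parenthetical is $\omega_t=\theta_t+U_{h,t}$.
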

Therefore, norms of $\rho$ can be estimated through PDE stability arguments; this will be performed below. Before doing so, however, we further estimate the term involving the elliptic reconstructions on the right-hand side from \eqref{4.7}.
\begin{lemma} \label{lem:ellrec_bound} On $J_n$, $n=1,\dots, N$, we have
\begin{equation}\label{L1}
\begin{aligned}
   \left( \nabla(\omega -\omega^n) , \nabla v \right) & \leq  \Big( \norm{\partial{U}_h^n - \partial{U}_h^{n-1}}^{}_{L^{}_2(\Omega)} +  \epsilon^{-2} \norm{F(U^{n}_h) - F(U^{n-1}_h)}^{}_{L^{}_2(\Omega)}\\
   & \quad  \quad  \ + \norm{f^n - f^{n-1}}^{}_{L^{}_2(\Omega)} \Big) \norm{v}^{}_{L^{}_2(\Omega)}
\end{aligned}
\end{equation}
for all $v \in H^1_0(\Omega)$.
\end{lemma}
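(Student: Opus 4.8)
The plan is to express $\omega - \omega^n$ on $J_n$ in terms of the differences of the elliptic reconstructions at the two endpoints of the interval, and then to bound the Dirichlet seminorm of this difference by the $L_2$-norm of the right-hand side of a suitable elliptic problem. Concretely, by Definition~\ref{def_trec} we have $\omega(t) - \omega^n = \ell^{}_{n-1}(t)(\omega^{n-1} - \omega^n)$ for $t\in J_n$, so that
\begin{equation*}
\left( \nabla(\omega - \omega^n), \nabla v \right) = \ell^{}_{n-1}(t)\left( \nabla(\omega^{n-1} - \omega^n), \nabla v \right),
\end{equation*}
and since $0 \le \ell^{}_{n-1}(t) \le 1$ on $J_n$, it suffices to estimate $\left( \nabla(\omega^{n-1} - \omega^n), \nabla v \right)$.

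The key step is to compute $g^{n-1}_h - g^n_h$, where $g^n_h$ is given by \eqref{g_h^n}, using the defining relation \eqref{4.3} of the elliptic reconstruction together with the strong form \eqref{3.3} of the scheme. First I would invoke \eqref{3.3} to rewrite the leading term $-\Delta^n_h U^n_h$ appearing in $g^n_h$: from \eqref{3.3} we have $-\Delta^n_h U^n_h = \mathcal{P}^n_h f^n - \epsilon^{-2}\mathcal{P}^n_h F(U^n_h) - k^{-1}_n(U^n_h - \mathcal{P}^n_h U^{n-1}_h)$. Substituting this into \eqref{g_h^n} causes the $\mathcal{P}^n_h F(U^n_h)$ and $\mathcal{P}^n_h f^n$ terms to cancel and telescopes the remaining terms, yielding the clean identity
\begin{equation*}
g^n_h = -k^{-1}_n(U^n_h - U^{n-1}_h) - \epsilon^{-2} F(U^n_h) + f^n,
\end{equation*}
i.e.\ $g^n_h$ equals the negative of the strong residual of the backward Euler step evaluated with the un-projected nonlinearity and data. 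Writing $\partial U^n_h := k^{-1}_n(U^n_h - U^{n-1}_h)$ (consistent with the notation in the statement), the difference is
\begin{equation*}
g^n_h - g^{n-1}_h = -\big(\partial U^n_h - \partial U^{n-1}_h\big) - \epsilon^{-2}\big(F(U^n_h) - F(U^{n-1}_h)\big) + (f^n - f^{n-1}).
\end{equation*}
Then, subtracting the two instances of \eqref{4.3} gives $\left( \nabla(\omega^n - \omega^{n-1}), \nabla v \right) = \langle g^n_h - g^{n-1}_h, v\rangle$, and a Cauchy--Schwarz inequality on each of the three terms of $g^n_h - g^{n-1}_h$ against $v$ in $L_2(\Omega)$, followed by the triangle inequality, produces exactly the bound \eqref{L1} (after multiplying by $\ell^{}_{n-1}(t) \le 1$ and taking absolute values).

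The main obstacle I anticipate is purely bookkeeping: one must be careful that the projection operators $\mathcal{P}^n_h$ and the discrete Laplacian $\Delta^n_h$ are associated with the time step $n$, so that the cancellation when inserting \eqref{3.3} into \eqref{g_h^n} is exact rather than merely approximate; in particular the term $-k^{-1}_n(\mathcal{P}^n_h U^{n-1}_h - U^{n-1}_h)$ in $g^n_h$ is precisely what is needed to convert $-k^{-1}_n(U^n_h - \mathcal{P}^n_h U^{n-1}_h)$ into $-k^{-1}_n(U^n_h - U^{n-1}_h) = -\partial U^n_h$. There is also a mild subtlety that $\omega^{n-1}$ and $\omega^n$ are genuine $H^1_0(\Omega)$ functions (not finite element functions), so the subtraction of \eqref{4.3} at levels $n-1$ and $n$ is legitimate for every $v \in H^1_0(\Omega)$ without any mesh-compatibility caveat; the compatibility of the meshes is only used elsewhere. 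Once the identity for $g^n_h$ is in hand, no further ideas are required.
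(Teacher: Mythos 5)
Your proposal is correct and follows essentially the same route as the paper: reduce $\omega-\omega^n$ to $\ell_{n-1}(t)(\omega^{n-1}-\omega^n)$, use \eqref{4.3} to pass to $g_h^{n-1}-g_h^n$, simplify $g_h^n$ via \eqref{3.3} to $-\partial U_h^n-\epsilon^{-2}F(U_h^n)+f^n$, and finish with Cauchy--Schwarz and the triangle inequality. The cancellation you flag as the main bookkeeping point is exactly the one the paper exploits, so no further comment is needed.
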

\begin{proof}
From \eqref{4.5} and Definition \ref{def_ellrec}, we can write
\begin{align*}
    & ( \nabla(\omega -\omega^n) , \nabla v ) 
    = \ell^{}_{n-1}(t) \left( \nabla(\omega^{n-1} -\omega^n) , \nabla v \right) \\ =&\  \ell^{}_{n-1}(t) \left( g_h^{n-1} - g_h^n , v \right) 
   \leq   \norm{g_h^{n-1} - g_h^n}^{}_{L^{}_2(\Omega)} \norm{v}^{}_{L^{}_2(\Omega)}.
\end{align*}
Then, using \eqref{g_h^n} in conjunction with \eqref{3.3}, we obtain
\begin{align*}
   g_h^n 
   & = - k^{-1}_n \left( U_h^{n} - \mathcal{P}^n_{h} U^{n-1}_h \right)  - {\epsilon}^{-2} F(U_h^{n}) + f^n - k^{-1}_n \left( \mathcal{P}^n_h  U^{n-1}_h - U^{n-1}_h  \right) \\
   & = k^{-1}_n \left( U^{n-1}_h - U^{n}_h \right) - {\epsilon}^{-2} F(U_h^{n}) + f^n,
\end{align*}
and correspondingly for $g_h^{n-1}$. Combining the above, the result already follows.
\end{proof}


\subsection{Energy argument}\label{energy-argument}
We begin by introducing some notation. We define
\begin{equation*}
\begin{aligned}
\mathcal{L}^{}_1 : & = \norm{\partial{U}_h^n - \partial{U}_h^{n-1}}^2_{L^{}_2(\Omega)} +  \epsilon^{-4} \norm{F(U^{n}_h) - F(U^{n-1}_h)}^2_{L^{}_2(\Omega)} + \norm{f^n - f^{n-1}}^2_{L^{}_2(\Omega)}, \\
\mathcal{L}^{}_2  :&= \norm{f^n - f}^2_{L^{}_2(\Omega)} +\epsilon^{-4} \norm{F(U^{}_h) - F(U^n_h)}^2_{L^{}_2(\Omega)},
\end{aligned}
\end{equation*}
on each  $J_n$, $n=1,\dots, N$, noting that $\mathcal{L}_2^{}\equiv\mathcal{L}_{2}^{}(t)$; for $n=1$ we adopt the convention that $ U^{-1}_{h} = U^0_{h}$.

Moreover, for brevity, we also set
\begin{equation*}
\begin{aligned}
\Theta^{}_1\equiv\Theta^{}_1(t): & = \frac{1}{2}  \norm{\theta^{}_t}^2_{L^{}_2(\Omega)} + \frac{11 }{4}C^4_{PF} \norm{\theta^{}_t}^4_{L^{}_4(\Omega)}   ,\\
\Theta^{}_2\equiv\Theta^{}_2(t): & = \epsilon^{-4}\Big( \big(C_0+ 396 \norm{U^{}_h}^2_{L^{}_\infty(\Omega)}\big) \norm{\theta}^2_{L^{}_2(\Omega)} 
  + \frac{ C^{}_1}{2 }  \norm{\theta}^4_{L^{}_4(\Omega)} +  C^{}_{0} \norm{\theta}^6_{L^{}_6(\Omega)} \Big),
\end{aligned}
\end{equation*}
\begin{equation*}
\begin{aligned}
A(t): & =   \epsilon^{-2}\Big( (  \theta^2 \rho^2+\rho^4+|\nabla \rho|^2, \int_{t}^{\uptau} \rho^2(s) \diff s   )  +  (  \theta^2, \rho^2 )\Big),
\end{aligned}
\end{equation*}
where $ C^{}_{0} : = (C^{}_{PF} \tilde{c}^2+1)/2$, $ C^{}_1 : =9 + 9  C^{}_{PF} \tilde{c}^2 +   6^4 11^2 C^{2}_{PF} \tilde{c}^4$, $C^{}_2 :=  2\cdot3^7 C^{2}_{PF} \tilde{c}^4$, where $C^{}_{PF}$ is the constant of the Poincar\'{e}-Friedrichs inequality $\norm{v}\le C_{PF}\norm{\nabla v}$ and $\tilde{c}$ as in \eqref{eq:GNL}. 

\begin{lemma}[$d=2$]\label{lemma_4.4}
Let $d=2$ and $u$ be the  solution of \eqref{2.1} and $\omega$ as in \eqref{4.5}. Assume that $\rho(t) \in W^{1,4}_0(\Omega)$ for a.e. $ t \in (0,T]$. Then, for any $\uptau\in (0,T]$, we have
\begin{equation}\label{4.10}
\begin{aligned}
    &\quad \frac{1}{4} \int_{0}^{\uptau} \norm{\rho}^4_{L^{}_4(\Omega)} \diff t  +  \frac{1}{8} \norm{\int_{0}^{\uptau}  \nabla \rho^2(s) \diff s}^2_{L^{}_2(\Omega)}  +  \frac{1}{2}  \norm{\rho(\uptau)}^2_{L^{}_2(\Omega)} \\
     & +  \int_{0}^{\uptau} A(t) \diff t +  \int_{0}^{\uptau} \Big(  \big( 1- \frac{\epsilon^2}{2}\big)\norm{\nabla \rho}^2_{L^{}_2(\Omega)} + \frac{1}{\epsilon^{2}} \left( F'(U^{}_h) \rho, \rho  \right) \Big) \diff t \\
     \leq & \quad \frac{1}{2}  \norm{\rho(0)}^2_{L^{}_2(\Omega)}  + \frac{C_{PF}^2}{2} \norm{\rho(0)}^4_{L_4^{}(\Omega)} + \int_{0}^{\uptau} \big( \Theta^{}_1 +  \Theta^{}_2 +  C_0( \mathcal{L}^{}_1 + \mathcal{L}^{}_2 )\big) \diff t  \\
    &   + \frac{1}{2}\int_{0}^{\uptau}\Big(  \norm{\int_{t}^{\uptau} \nabla \rho^2(s) \diff s}^2_{L_2^{}(\Omega)} +\alpha(U_h) \norm{\rho}^2_{L^{}_2(\Omega)}\Big)  \diff t \\
    & +\frac{1}{4\epsilon^{6}} \int_{0}^{\uptau}\Big( \beta(\theta,U_h) \norm{\int_{t}^{\uptau} \nabla \rho^2(s) \diff s}^4_{L_2^{}(\Omega)} +\gamma(\theta,U_h)\norm{\rho}^4_{L^{}_2(\Omega)}\Big) \diff t,
\end{aligned}
\end{equation}
where
\[
\begin{aligned}
\alpha(U_h):=& \norm{F'(U_h^{})}^2_{L_{\infty}(\Omega)}  + \norm{U_h^{}}^2_{L_{\infty}(\Omega)} + 7 \\
\beta(\theta,U_h):= &\ \frac{C^{}_2\epsilon^{4}}{16}  \big(  \norm{\theta}^4_{L_{\infty}(\Omega)} + \norm{U^{}_h}^4_{L_{\infty}(\Omega)} \big)  + 2\epsilon^{2} \norm{U^{}_h}^4_{L_{\infty}(\Omega)}  \\
&
+2 C_{PF}^2 \tilde{c}^4 \norm{F'(U_h^{})}^2_{L_{\infty}(\Omega)}+ 11 \epsilon^{6} \big(\norm{F'(U_h^{})}^4_{L_{\infty}(\Omega)} +  \norm{U^{}_h}^4_{L_{\infty}(\Omega)} +6\big) ,\\
\gamma(\theta,U_h):= &\ 2 \tilde{c}^4\Big( C_{PF}^2  \norm{F'(U^{}_h)}^2_{L^{}_{\infty}(\Omega)} +  36 \big(  \norm{\theta}^2_{L^{}_{\infty}(\Omega)} +\norm{U^{}_h}^2_{L^{}_{\infty}(\Omega)} \big)\Big) .
\end{aligned}
\]
\end{lemma}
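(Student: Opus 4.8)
The plan is to test the error equation \eqref{4.7} with the carefully chosen test function
\[
v = \rho + \rho^3 + \int_t^{\uptau}\!\big(-\Delta\rho^2(s)\big)\diff s = \rho + \rho^3 - \Delta\!\int_t^{\uptau}\!\rho^2(s)\diff s,
\]
which is precisely the function needed to produce, upon integration over $(0,\uptau)$, the left-hand side of \eqref{4.10}: the $\rho$-part yields the $L_2(L_2)$-stability and the $L_2(H^1)$-term together with the spectral term $\epsilon^{-2}(F'(U_h)\rho,\rho)$; the $\rho^3$-part yields $\tfrac14\int\norm{\rho}_{L_4}^4$ plus the $\nabla\rho^2$-contribution (since $(\nabla\rho,\nabla\rho^3)=3\norm{\rho\nabla\rho}_{L_2}^2=\tfrac34\norm{\nabla\rho^2}_{L_2}^2$); and the nonlocal term $\int_t^\uptau(-\Delta\rho^2)\diff s$ is the device that lets us control the cubic remainder in the nonlinearity, producing both the $\tfrac18\norm{\int_0^\uptau\nabla\rho^2}_{L_2}^2$ term and the bulk term $A(t)$. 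First I would record the identities for $\langle\rho_t,v\rangle$ against each of the three pieces (the third one needs an integration by parts in time, using that $\frac{d}{dt}\int_t^\uptau\rho^2\diff s=-\rho^2(t)$, which is where $(\theta^2,\rho^2)$ and the $\rho^2\cdot\int_t^\uptau\rho^2$ cross-terms in $A$ are born after substituting $\rho=\theta-e$ is avoided — rather $u=\omega-\rho$ so $F(U_h)-F(u)$ is expanded in $\theta$ and $\rho$).

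Next I would expand the nonlinearity. Writing $F(U_h)-F(u)=F(U_h)-F(U_h-\theta+\rho)$ and Taylor-expanding about $U_h$, one gets the leading term $F'(U_h)(\rho-\theta)$ plus quadratic and cubic corrections in $(\rho,\theta)$; after pairing against $v$ the good part $\epsilon^{-2}(F'(U_h)\rho,\rho)$ is kept on the left (to be absorbed later by the spectral estimate), the term $\epsilon^{-2}(F'(U_h)\rho,\rho^3)$ combines with the cubic corrections to give, modulo sign, $\epsilon^{-2}\norm{\rho^2}_{L_2}^2$-type and $\epsilon^{-2}(\rho^4,\ldots)$-type quantities that feed into $A(t)$, and everything involving $\theta$ is moved to the right and bounded by $\Theta_2$. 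The terms $\Theta_1$ collect the contributions of $\langle f^n-f,v\rangle$ and $\langle\theta_t,v\rangle$ after Young's inequality (the $L_4$-parts of $v$ forcing the $\norm{\theta_t}_{L_4}^4$ and $C_{PF}$ factors, via \eqref{eq:GNL} and Poincaré–Friedrichs), and $C_0(\mathcal L_1+\mathcal L_2)$ absorbs the elliptic-reconstruction term through Lemma \ref{lem:ellrec_bound} and the jump term $\epsilon^{-2}(F(U_h)-F(U_h^n),v)$. The remaining genuinely nonlinear couplings — those of the form $\epsilon^{-2}\big(\text{(quadratic in }\theta,U_h)\cdot\rho\,,\,\int_t^\uptau(-\Delta\rho^2)\big)$ and $\epsilon^{-2}\big(F'(U_h)\rho\,,\,\int_t^\uptau(-\Delta\rho^2)\big)$ — are handled by Cauchy–Schwarz and GNL in $d=2$, splitting into an $\epsilon$-weighted part that is absorbed into $\tfrac12\norm{\int_0^\uptau\nabla\rho^2}_{L_2}^2$ on the left and a remainder that, after Young's inequality at the right powers, produces exactly the $\alpha,\beta,\gamma$ terms on the last two lines of \eqref{4.10}, with the stated constants $C_0,C_1,C_2$ emerging from bookkeeping the GNL constant $\tilde c$ and $C_{PF}$.

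The main obstacle is the correct allocation of powers of $\epsilon$ and of the error norms when estimating the mixed terms against $v=\rho+\rho^3+\int_t^\uptau(-\Delta\rho^2)$: each such term is cubic or quartic in the errors and carries an $\epsilon^{-2}$, and one must decide, term by term, how much to absorb into the three coercive quantities on the left ($\tfrac14\int\norm{\rho}_{L_4}^4$, $\tfrac18\norm{\int_0^\uptau\nabla\rho^2}_{L_2}^2$, $\int A$) versus how much to leave on the right multiplied by $\norm{\int_t^\uptau\nabla\rho^2}_{L_2}^2$ or $\norm{\rho}_{L_2}^2$ (to be closed later by a Gronwall/continuation argument). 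In $d=2$ the Ladyzhenskaya inequality $\norm{w}_{L_4}\le\tilde c\norm{w}_{L_2}^{1/2}\norm{\nabla w}_{L_2}^{1/2}$ applied to $w=\rho^2$ is what makes the absorption possible with only polynomial $\epsilon^{-1}$ dependence; keeping that chain of inequalities tight, rather than merely convergent, is what forces the somewhat elaborate constants, and it is the only genuinely delicate point — the rest is careful but routine expansion, Young's inequality, and collecting terms.
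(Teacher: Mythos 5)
Your overall strategy (energy argument on the error equation \eqref{4.7} with a time-weighted test function, Taylor expansion of the nonlinearity about $U_h$, GNL in $d=2$, Young absorptions) is the paper's strategy in spirit, but the specific test function you propose is not the one that works, and this is a genuine gap rather than a cosmetic difference. The paper tests \eqref{4.7} with $\phi=\rho\,\bigl(1+\int_t^{\uptau}\rho^2(s)\,\mathrm{d}s\bigr)$, which lies in $H^1_0(\Omega)$ precisely under the assumed regularity $\rho(t)\in W^{1,4}_0(\Omega)$. Your choice $v=\rho+\rho^3-\Delta\!\int_t^{\uptau}\rho^2(s)\,\mathrm{d}s$ fails on two counts. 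First, admissibility: \eqref{4.7} holds only for $v\in H^1_0(\Omega)$, and $-\Delta\!\int_t^{\uptau}\rho^2\,\mathrm{d}s$ is not even in $L_2(\Omega)$ under the available regularity (let alone $H^1_0$, which the term $(\nabla\rho,\nabla v)$ would require); one cannot simply ``integrate back by parts'' because the right-hand side of \eqref{4.7} would then involve $\nabla\theta_t$, $\nabla(f^n-f)$, $\nabla\bigl(F(U_h)-F(U_h^n)\bigr)$, none of which is controlled by the quantities $\Theta_1,\Theta_2,\mathcal L_1,\mathcal L_2$ appearing in \eqref{4.10}.

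Second, and more fundamentally, your claimed identities for the left-hand side are not what the computation gives: $\langle\rho_t,\rho^3\rangle=\tfrac14\tfrac{\mathrm d}{\mathrm dt}\norm{\rho}^4_{L_4(\Omega)}$ integrates in time to \emph{endpoint} values, not to $\tfrac14\int_0^{\uptau}\norm{\rho}^4_{L_4(\Omega)}\,\mathrm dt$; and $(\nabla\rho,\nabla\rho^3)=\tfrac34\norm{\nabla\rho^2}^2_{L_2(\Omega)}$ integrates to $\tfrac34\int_0^{\uptau}\norm{\nabla\rho^2}^2_{L_2(\Omega)}\,\mathrm dt$, which is not the term $\tfrac18\norm{\int_0^{\uptau}\nabla\rho^2(s)\,\mathrm ds}^2_{L_2(\Omega)}$ in \eqref{4.10}. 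In the paper both coercive quantities, as well as the weighted terms in $A(t)$, are generated by the \emph{multiplicative} weight $\rho\int_t^{\uptau}\rho^2(s)\,\mathrm ds$: pairing it with $\rho_t$ and integrating by parts in time (using $\tfrac{\mathrm d}{\mathrm dt}\int_t^{\uptau}\rho^2\,\mathrm ds=-\rho^2$) yields $\tfrac12\int_0^{\uptau}\norm{\rho}^4_{L_4(\Omega)}\,\mathrm dt$ (reduced to $\tfrac14$ after eleven absorptions of size $\tfrac1{44}$ or $\tfrac3{44}$ on the right), while pairing it with $\nabla\rho$ yields $\tfrac14\norm{\int_0^{\uptau}\nabla\rho^2(s)\,\mathrm ds}^2_{L_2(\Omega)}$, with $\tfrac18$ of it spent on the initial-data cross term producing $\tfrac{C_{PF}^2}{2}\norm{\rho(0)}^4_{L_4(\Omega)}$. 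Your nonlocal piece, even treated formally as $(\nabla\rho_t,\int_t^{\uptau}\nabla\rho^2\,\mathrm ds)$, integrates by parts in time to a sign-indefinite term $2\int_0^{\uptau}\!\int_\Omega\rho\,\abs{\nabla\rho}^2$, so it cannot replace this mechanism. (A minor further slip: with $e=u-U_h=\theta-\rho$ one has $u=U_h+\theta-\rho$, so the expansion should be of $F(U_h+\theta-\rho)$, not $F(U_h-\theta+\rho)$.) To repair the argument, replace your test function by \eqref{4.11} and follow the splittings $I_j=I_j^1+I_j^2$ with GNL \eqref{eq:GNL} applied to $\int_t^{\uptau}\rho^2\,\mathrm ds$ and to $\rho$ as in the paper.
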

\begin{proof}
	Using Taylor's theorem, we immediately deduce
	\[
	F(U_h^{}) - F(u) = -e F'(U_h^{}) -3e^2 U_h^{} -e^3.
	\] 
Let $\phi:[0,\uptau]\times \Omega \rightarrow \mathbb{R}$, φορ $0<\uptau \leq T$, such that
\begin{equation}\label{4.11}
    \phi(\cdot,t)  = \rho(\cdot,t) \Big( \int_{t}^{\uptau} \rho^2(\cdot,s) \diff s +  1 \Big), \quad t\in [0,\uptau].
\end{equation}
Hypothesis $\rho \in W^{1,4}_0(\Omega)$ implies that $\phi \in H_0^1(\Omega)$. Setting $v=\phi$ in \eqref{4.7}, we have
\begin{align*}
    & \langle \rho_t^{},\phi \rangle   +   \left( \nabla \rho, \nabla \phi \right)  -  {\epsilon}^{-2}  \left(e F'(U_h^{}) +3e^2 U_h^{} +e^3, \phi \right) = \langle f^n - f,\phi \rangle + \langle \theta^{}_t , \phi \rangle \\[0.1cm]
    & \quad + \epsilon^{-2} \left( F(U^{}_h) - F(U^n_h), \phi \right)  + \left( \nabla(\omega -\omega^n) , \nabla \phi \right)  .
\end{align*}
Observing now the identities
\begin{align*}
\left( e^2 U^{}_h ,  \phi \right)  = & \left( \theta^2 U^{}_h ,  \phi \right)  +  \left( \rho^2 U^{}_h ,   \phi \right)   -  2 \left( \theta \rho  U^{}_h ,  \phi \right), \\
\left( e^3,  \phi \right)  = & \left( \theta^3, \phi \right)  - 3 \left(  \theta^2 \rho,  \phi \right) +  3 \left(  \theta \rho^2, \phi \right)  -  \left( \rho^3, \phi \right),
\end{align*}
elementary calculations yield
\begin{equation}\label{4.12}
\begin{aligned}
    & \quad\ \frac{1}{2}\frac{\diff}{\diff{ t}} \norm{\rho}^2_{L^{}_2(\Omega)} + \langle \rho_t^{}, \rho \int_{t}^{\uptau} \rho^2(s) \diff s \rangle +  ( \nabla \rho, \rho   \int_{t}^{\uptau} \nabla \rho^2(s) \diff s)   \\
    &  \quad  + \norm{\nabla \rho}^2_{L^{}_2(\Omega)} + \epsilon^{-2} \left( F'(U^{}_h) \rho, \rho  \right) + \epsilon^{-2}\norm{\rho}^4_{L^{}_4(\Omega)} + A(t) \\
    & =  \langle f^n - f,\phi \rangle +  \langle \theta^{}_t, \phi \rangle   + \epsilon^{-2} \left( F(U^{}_h) - F(U^n_h), \phi \right) + \left( \nabla(\omega -\omega^n) , \nabla \phi \right) \\
    & \quad + 3 \epsilon^{-2} \left(  \theta^2 U^{}_h , \phi \right)   + 3 \epsilon^{-2} \left( \rho^2 U^{}_h , \phi \right) - 6 \epsilon^{-2} \left( \theta \rho U^{}_h, \phi \right)  +  \epsilon^{-2} ( \theta^3, \phi )  \\
    & \quad  + 3 \epsilon^{-2} ( \theta \rho^2,\phi )  \!+\!  \epsilon^{-2} ( F'(U^{}_h) \theta, \phi  ) - \epsilon^{-2} ( F'(U^{}_h) \rho, \rho \!\!\int_{t}^{\uptau} \rho^2(s) \diff s  )  =: \sum_{j=1}^{11} I^{}_{j}.
\end{aligned}
\end{equation}
We shall further estimate each $I_j$. We begin by splitting $I^{}_1 $ into
\begin{align*}
I^{}_1  = \langle f^n - f, \rho \int_{t}^{\uptau} \rho^2(s) \diff s \rangle + \langle f^n - f, \rho \rangle =: I^1_1 + I^2_1.
\end{align*}
Applying  H\"{o}lder, GNL for $d=2$, Poincar\'{e}-Friedrichs  and Young inequalities gives, respectively,
\begin{align*}
    I^1_1 &  \leq   \norm{ f^n - f }^{}_{L^{}_2(\Omega)} \norm{\rho}^{}_{L^{}_4(\Omega)} \norm{\int_{t}^{\uptau} \rho^2(s) \diff s}^{}_{L_4^{}(\Omega)} \\
    & \leq  \tilde{c} \norm{ f^n - f }^{}_{L^{}_2(\Omega)} \norm{\rho}^{}_{L^{}_4(\Omega)}  \norm{\int_{t}^{\uptau} \rho^2(s) \diff s}^{1/2}_{L_2^{}(\Omega)} \norm{ \nabla \int_{t}^{\uptau} \rho^2(s) \diff s}^{1/2}_{L_2^{}(\Omega)} \\
    & \leq  C^{1/2}_{PF} \tilde{c} \ \norm{ f^n - f }^{}_{L^{}_2(\Omega)} \norm{\rho}^{}_{L^{}_4(\Omega)} \norm{ \nabla \int_{t}^{\uptau} \rho^2(s) \diff s}^{}_{L_2^{}(\Omega)} \\
    & \leq \frac{C^{}_{PF} \tilde{c}^2}{2}  \norm{ f^n - f }^{}_{L^{}_2(\Omega)} + \frac{1}{44} \norm{\rho}^4_{L^{}_4(\Omega)} + \frac{11}{4} \norm{\int_{t}^{\uptau} \nabla \rho^2(s) \diff s}^4_{L_2^{}(\Omega)}.
\end{align*}
The Cauchy-Schwarz and Young inequalities also yield
$
 I^2_1 \leq 
  \frac{1}{2} \mathcal{L}^{}_2 + \frac{1}{2} \norm{\rho}^2_{L^{}_2(\Omega)}.
$ 
Likewise, we split $I_3$ as follows:
$$I^{}_3 = \epsilon^{-2} (  F(U^{}_h) - F(U^n_h), \rho \int_{t}^{\uptau} \rho^2(s) \diff s ) +  \epsilon^{-2} \left(  F(U^{}_h) - F(U^n_h), \rho \right) =: I^1_3 + I^2_3,
$$
yielding the following bounds
\begin{align*}
    I^1_3 \leq & \ \frac{C^{}_{PF} \tilde{c}^2}{2 \epsilon^{4}} \norm{F(U^{}_h) - F(U^n_h)}^2_{L^{}_2(\Omega)}{+} \frac{1}{44} \norm{\rho}^4_{L^{}_4(\Omega)} {+} \frac{11}{4} \norm{\int_{t}^{\uptau} \nabla \rho^2(s) \diff s}^4_{L_2^{}(\Omega)},\\
    I^2_3 \leq &\ \frac{1}{2 \epsilon^{4}} \norm{F(U^{}_h) - F(U^n_h)}^2_{L^{}_2(\Omega)} + \frac{1}{2} \norm{\rho}^2_{L^{}_2(\Omega)}.
\end{align*}
From Lemma \ref{lem:ellrec_bound} and working as before, we have
$$I^{}_4 = ( \nabla(\omega -\omega^n) , \nabla \Big( \rho \int_{t}^{\uptau} \rho^2(s) \diff s \Big)  ) + \left( \nabla(\omega -\omega^n) , \nabla  \rho \right) := I^1_4 + I^2_4,$$
\begin{align*}
    I^1_{4} \leq & \ \frac{ C^{}_{PF} \tilde{c}^2 }{2} \mathcal{L}^{}_1  + \frac{3}{44} \norm{\rho}^4_{L^{}_4(\Omega)} + \frac{11}{4} \norm{\int_{t}^{\uptau} \nabla \rho^2(s) \diff s}^4_{L_2^{}(\Omega)},\\
    I^2_4 \leq & \ \frac{1}{2} \mathcal{L}^{}_1 + \frac{3}{2} \norm{\rho}^2_{L^{}_2(\Omega)}.
\end{align*}
Next, we split $I_2$ as follows:
$$I^{}_2 = \langle \theta^{}_t, \rho \int_{t}^{\uptau} \rho^2(s) \diff s \rangle + \langle \theta^{}_t, \rho \rangle =: I^1_2 + I^2_2$$
and, using  H\"{o}lder, Poincar\'{e}-Friedrichs and Young inequalities, we deduce
\begin{align*}
   I^1_2 & \leq    \norm{\theta^{}_t}^{}_{L^{}_4(\Omega)} \norm{\rho}^{}_{L^{}_4(\Omega)} \norm{\int_{t}^{\uptau} \rho^2(s) \diff s}^{}_{L_2^{}(\Omega)}\\
   & \leq   C^{}_{PF} \norm{\theta^{}_t}^{}_{L^{}_4(\Omega)} \norm{\rho}^{}_{L^{}_4(\Omega)} \norm{\nabla \int_{t}^{\uptau}  \rho^2(s) \diff s}^{}_{L_2^{}(\Omega)}\\
    & \leq \frac{11 C^4_{PF}}{4}  \norm{\theta^{}_t}^4_{L^{}_4(\Omega)} + \frac{1}{44}  \norm{\rho}^4_{L^{}_4(\Omega)} + \frac{1}{2} \norm{\int_{t}^{\uptau} \nabla \rho^2(s) \diff s}^2_{L_2^{}(\Omega)},\\
    I^2_2 & \leq    \frac{1}{2} \norm{\theta^{}_t}^2_{L^{}_2(\Omega)} + \frac{1}{2} \norm{\rho}^2_{L^{}_2(\Omega)}.
\end{align*}
Next, we split
$$I^{}_5 = 3 \epsilon^{-2} (  \theta^2 U^{}_h , \rho \int_{t}^{\uptau} \rho^2(s) \diff s ) +  3  \epsilon^{-2} (  \theta^2 U^{}_h , \rho  )=: I^1_5 + I^2_5 ,$$
which can be further bounded as follows: 
\begin{align*}
   I^1_5 & \leq 3 \epsilon^{-2} \norm{\theta^2}^{}_{L^{}_2(\Omega)} \norm{U_h^{}}^{}_{L_{\infty}(\Omega)} \norm{\rho}^{}_{L^{}_4(\Omega)} \norm{\int_{t}^{\uptau}  \rho^2(s) \diff s}^{}_{L_4^{}(\Omega)}\\
   & \leq  3 \epsilon^{-2} \tilde{c} \ \norm{\theta}^2_{L^{}_4(\Omega)} \norm{U_h^{}}^{}_{L_{\infty}(\Omega)} \norm{\rho}^{}_{L^{}_4(\Omega)} \norm{\int_{t}^{\uptau}  \rho^2(s) \diff s}^{1/2}_{L_2^{}(\Omega)} \norm{ \nabla \int_{t}^{\uptau}  \rho^2(s) \diff s}^{1/2}_{L_2^{}(\Omega)}\\
   & \leq  3 \epsilon^{-2} C^{1/2}_{PF} \tilde{c} \ \norm{\theta}^2_{L^{}_4(\Omega)} \norm{U_h^{}}^{}_{L_{\infty}(\Omega)} \norm{\rho}^{}_{L^{}_4(\Omega)}  \norm{ \nabla \int_{t}^{\uptau}  \rho^2(s) \diff s}^{}_{L_2^{}(\Omega)}\\
   & \leq \frac{9 C^{}_{PF} \tilde{c}^2 }{2\epsilon^{4}}  \norm{\theta}^4_{L^{}_4(\Omega)}   + \frac{1}{44} \norm{\rho}^4_{L^{}_4(\Omega)} +  \frac{11}{4}  \norm{U_h^{}}^4_{L_{\infty}(\Omega)}  \norm{\int_{t}^{\uptau} \nabla \rho^2(s) \diff s}^4_{L_2^{}(\Omega)}, \\
   I^2_5 & \leq   \frac{9 }{2\epsilon^{4}}  \norm{\theta}^4_{L^{}_4(\Omega)} +  \frac{1}{2}  \norm{U_h^{}}^2_{L_{\infty}(\Omega)}  \norm{\rho}^2_{L^{}_2(\Omega)}.
\end{align*}
In the same spirit, we also have
$$ I^{}_7 = - 6 \epsilon^{-2} ( \theta \rho U^{}_h, \rho  \int_{t}^{\uptau} \rho^2(s) \diff s )  - 6 \epsilon^{-2} ( \theta \rho U^{}_h, \rho )=: I^1_7 + I^2_7,$$
and, thus,
\begin{align*}
    I^1_7 & \leq    6 \epsilon^{-2} \norm{\theta}^{}_{L^{}_4(\Omega)} \norm{\rho^2}^{}_{L^{}_2(\Omega)} \norm{U_h^{}}^{}_{L_{\infty}(\Omega)} \norm{\int_{t}^{\uptau}  \rho^2(s) \diff s}^{}_{L_4^{}(\Omega)} \\
    & \leq  \frac{6 C^{1/2}_{PF}\tilde{c}}{ \epsilon^{2}}  \ \norm{\theta}^{}_{L^{}_4(\Omega)} \norm{\rho}^2_{L^{}_4(\Omega)} \norm{U_h^{}}^{}_{L_{\infty}(\Omega)} \norm{\int_{t}^{\uptau} \nabla \rho^2(s) \diff s}^{}_{L_2^{}(\Omega)}  \\
    & \leq   \frac{ 6^4 11^2 C^{2}_{PF} \tilde{c}^4}{ 2\epsilon^{4}} \norm{\theta}^4_{L^{}_4(\Omega)} + \frac{1}{44} \norm{\rho}^4_{L^{}_4(\Omega)}  +  \frac{1}{2 \epsilon^{4}} \norm{U_h^{}}^4_{L_{\infty}(\Omega)}  \norm{\int_{t}^{\uptau} \nabla \rho^2(s) \diff s}^4_{L_2^{}(\Omega)} ,\\
    I^2_7 & \leq  \frac{6}{ \epsilon^{2}} \norm{\theta}^{}_{L^{}_2(\Omega)} \norm{U_h^{}}^{}_{L_{\infty}(\Omega)} \norm{\rho}^2_{L^{}_4(\Omega)}
    \leq \frac{396}{\epsilon^{4}} \norm{U_h^{}}^2_{L_{\infty}(\Omega)} \norm{\theta}^2_{L^{}_2(\Omega)} + \frac{1}{44} \norm{\rho}^4_{L^{}_4(\Omega)}.
\end{align*}
Next, we consider the splitting
$$ I^{}_{10} = \epsilon^{-2} ( F'(U^{}_h) \theta, \rho \int_{t}^{\uptau} \rho^2(s) \diff s  ) +\epsilon^{-2} ( F'(U^{}_h) \theta, \rho  ) =: I^1_{10} + I^2_{10},$$ 
and we have the following bounds:
\begin{align*}
    I^1_{10} & \leq    \epsilon^{-2}  \norm{F'(U_h^{})}^{}_{L_{\infty}(\Omega)} \norm{\theta}^{}_{L^{}_2(\Omega)} \norm{\rho}^{}_{L^{}_4(\Omega)} \norm{\int_{t}^{\uptau}  \rho^2(s) \diff s}^{}_{L_4^{}(\Omega)}\\
    & \leq  \frac{C^{1/2}_{PF} \tilde{c}}{\epsilon^{2} } \norm{F'(U_h^{})}^{}_{L_{\infty}(\Omega)} \norm{\theta}^{}_{L^{}_2(\Omega)} \norm{\rho}^{}_{L^{}_4(\Omega)} \norm{\int_{t}^{\uptau} \nabla \rho^2(s) \diff s}^{}_{L_2^{}(\Omega)} \\
    & \leq  \frac{C^{}_{PF} \tilde{c}^2}{2 \epsilon^{4}} \norm{\theta}^2_{L^{}_2(\Omega)}  +  \frac{1}{44}  \norm{\rho}^4_{L^{}_4(\Omega)}  + \frac{11}{4} \norm{F'(U_h^{})}^{4}_{L_{\infty}(\Omega)} \norm{\int_{t}^{\uptau} \nabla \rho^2(s) \diff s}^4_{L_2^{}(\Omega)},\\
    I^2_{10} & \leq    \frac{1}{2 \epsilon^{4}} \norm{\theta}^2_{L^{}_2(\Omega)} + \frac{1}{2}  \norm{F'(U_h^{})}^2_{L_{\infty}(\Omega)}  \norm{\rho}^2_{L^{}_2(\Omega)}.
\end{align*}
Next, we set
$$ I^{}_8 = \epsilon^{-2} ( \theta^3,\rho \int_{t}^{\uptau} \rho^2(s) \diff s )  +  \epsilon^{-2} ( \theta^3,\rho ) =: I^1_8 + I^2_8,$$
and we further estimate as follows:
\begin{align*}
    I^1_8 & \leq   \epsilon^{-2}  \norm{\theta^3}^{}_{L^{}_2(\Omega)} \norm{\rho}^{}_{L^{}_4(\Omega)} \norm{\int_{t}^{\uptau} \rho^2(s) \diff s}^{}_{L_4^{}(\Omega)}\\
    & \leq  \frac{ C^{1/2}_{PF} \tilde{c}}{\epsilon^{2}}  \norm{\theta}^{3}_{L^{}_6(\Omega)} \norm{\rho}^{}_{L^{}_4(\Omega)} \norm{\int_{t}^{\uptau} \nabla \rho^2(s) \diff s}^{}_{L_2^{}(\Omega)} \\
    & \leq  \frac{ C^{}_{PF} \tilde{c}^2 }{2 \epsilon^{4}} \norm{\theta}^6_{L^{}_6(\Omega)}  +  \frac{1}{44} \norm{\rho}^4_{L^{}_4(\Omega)}+  \frac{11}{4}  \norm{\int_{t}^{\uptau} \nabla \rho^2(s) \diff s}^4_{L_2^{}(\Omega)},\\
    I^2_8 & \leq   \frac{1}{2 \epsilon^{4}} \norm{\theta}^6_{L^{}_6(\Omega)}    +  \frac{1}{2}  \norm{\rho}^2_{L^{}_2(\Omega)}.
\end{align*}
For $I^{}_6$ and $I^{}_9$, we work collectively as follows:
\begin{align*}
    I^{}_6+I^{}_9 & = 3 \epsilon^{-2} ( \rho^2 (U^{}_h+\theta) ,  \rho \int_{t}^{\uptau} \rho^2(s) \diff s) + 3 \epsilon^{-2} ( \rho^2 (U^{}_h+\theta),  \rho  ) =: I^1_{6,9} + I^2_{6,9},
\end{align*}
and estimate:
\begin{align*}
    I^1_{6,9} 
    & \leq  \frac{3   C^{1/2}_{PF}\tilde{c}}{ \epsilon^{2}} \norm{\rho}^{3}_{L^{}_{4}(\Omega)} \left( \norm{\theta}^{}_{L_{\infty}(\Omega)} + \norm{U^{}_h}^{}_{L_{\infty}(\Omega)} \right) \norm{ \nabla \int_{t}^{\uptau} \rho^2(s) \diff s}^{}_{L_2^{}(\Omega)} \\
    & \leq  \epsilon^{-2} \norm{\rho}^4_{L^{}_4(\Omega)} +  \frac{ C^{}_2
    }{ 64  \epsilon^{2}} \left(  \norm{\theta}^4_{L_{\infty}(\Omega)} + \norm{U^{}_h}^4_{L_{\infty}(\Omega)} \right) \norm{\int_{t}^{\uptau} \nabla \rho^2(s) \diff s}^4_{L_2^{}(\Omega)},\\
   I^2_{6,9}  & \leq  \frac{3}{  \epsilon^{2}} \norm{ \rho}^2_{L^{}_4(\Omega)} \left( \norm{\theta}^{}_{L^{}_{\infty}(\Omega)} +\norm{U^{}_h}^{}_{L^{}_{\infty}(\Omega)} \right)   \norm{\rho}^{}_{L^{}_2(\Omega)} \\
   & \leq  \frac{3  \tilde{c}^2}{\epsilon^{2} }  \norm{ \nabla \rho}^{}_{L^{}_2(\Omega)} \left( \norm{\theta}^{}_{L^{}_{\infty}(\Omega)} +\norm{U^{}_h}^{}_{L^{}_{\infty}(\Omega)} \right)   \norm{\rho}^{2}_{L^{}_2(\Omega)} \\
   & \leq  \frac{\epsilon^2}{4}  \norm{\nabla \rho}^2_{L^{}_2(\Omega)}  +  \frac{18 \tilde{c}^4} {\epsilon^{6}} \left( \norm{\theta}^2_{L^{}_{\infty}(\Omega)} +\norm{U^{}_h}^2_{L^{}_{\infty}(\Omega)} \right)   \norm{\rho}^4_{L^{}_2(\Omega)}.
\end{align*}
Finally for the last term on the right-hand side of \eqref{4.12}, we have
\begin{align*}
    I^{}_{11} & \leq  \epsilon^{-2} \norm{F'(U_h^{})}^{}_{L_{\infty}(\Omega)} \norm{\rho}^2_{L^{}_4(\Omega)} \norm{\int_{t}^{\uptau} \rho^2(s) \diff s}^{}_{L_2^{}(\Omega)} \\
    & \leq  \frac{ C^{}_{PF} \tilde{c}^2}{\epsilon^{2}} \norm{F'(U_h^{})}^{}_{L_{\infty}(\Omega)} \norm{\rho}^{}_{L^{}_2(\Omega)} \norm{ \nabla \rho}^{}_{L^{}_2(\Omega)} \norm{\int_{t}^{\uptau} \nabla \rho^2(s) \diff s}^{}_{L_2^{}(\Omega)}\\
    & \leq   \frac{\epsilon^2}{4}  \norm{\nabla \rho}^2_{L^{}_2(\Omega)}  +  \frac{  C^{2}_{PF} \tilde{c}^4 }{ 2 \epsilon^{6}} \norm{F'(U^{}_h)}^2_{L^{}_{\infty}(\Omega)}  \norm{ \rho}^4_{L^{}_2(\Omega)} \\
    & \quad   +  \frac{ C^{2}_{PF} \tilde{c}^4 }{2 \epsilon^{6}} \norm{F'(U^{}_h)}^2_{L^{}_{\infty}(\Omega)} \norm{\int_{t}^{\uptau} \nabla \rho^2(s) \diff s}^4_{L^{}_2(\Omega)}. 
\end{align*}
Applying the above estimates into \eqref{4.12} and integrating with respect to $t\in(0,\uptau)$ and observing the identities
\begin{align*}
    \int_0^{\tau} \langle \rho_t^{}, \rho \int_{t}^{\uptau} \rho^2(s) \diff s \rangle \diff t & = - \frac{1}{2} \langle \rho^2(0), \rho \int_{0}^{\uptau} \rho^2(s) \diff s \rangle + \frac{1}{2} \int_{t}^{\uptau} \norm{ \rho}^4_{L^{}_4(\Omega)} \diff t,\\
    \int_0^{\tau} ( \nabla \rho, \rho   \int_{t}^{\uptau} \nabla \rho^2(s) \diff s) \diff t & = 
    -\frac{1}{4} \int_0^{\tau} \frac{\diff }{\diff t} (  \int_{t}^{\uptau} \nabla \rho^2(s) \diff s, \int_{t}^{\uptau} \nabla \rho^2(s) \diff s ) \diff t \\
    & =\frac{1}{4}\norm{\int_{0}^{\uptau} \nabla \rho^2(s) \diff s}^2_{L^{}_2(\Omega)},
\end{align*}
along with elementary manipulations, the result already follows.
\end{proof}

The use of the dimension-dependent GNL inequalities \eqref{eq:GNL} necessitates certain modifications in the above argument when $d=3$, which we now provide. For brevity, we shall only provide the terms which are handled differently to the proof of the two-dimensional case from Lemma \ref{lemma_4.4}.

\begin{lemma}[$d=3$]\label{lemma_4.5}
Let $d=3$, $u$ the solution of \eqref{2.1} and $\omega$ as in \eqref{4.5}. Assume that $\rho(t) \in W^{1,4}_0(\Omega)$ for a.e. $ t \in (0,T]$. Then, for any $\uptau\in (0,T]$, we have
\begin{equation}\label{d=3}
\begin{aligned}
  &\quad \frac{1}{8} \int_{0}^{\uptau} \norm{\rho}^4_{L^{}_4(\Omega)} \diff t  +  \frac{1}{8} \norm{\int_{0}^{\uptau}  \nabla \rho^2(s) \diff s}^2_{L^{}_2(\Omega)}  +  \frac{1}{2}  \norm{\rho(\uptau)}^2_{L^{}_2(\Omega)} \\
 &+  \int_{0}^{\uptau} A(t) \diff t +  \int_{0}^{\uptau} \Big(  \big( 1- \frac{\epsilon^2}{2}\big)\norm{\nabla \rho}^2_{L^{}_2(\Omega)} + \frac{1}{\epsilon^{2}} ( F'(U^{}_h) \rho, \rho  ) \Big) \diff t \\
    \leq&  \quad \frac{1}{2}  \norm{\rho(0)}^2_{L^{}_2(\Omega)}  + \frac{C_{PF}^2}{2} \norm{\rho(0)}^4_{L_4^{}(\Omega)} + \int_{0}^{\uptau} \big( \Theta^{}_1 + \tilde{\Theta}^{}_2 +\tilde{C}_0( \mathcal{L}^{}_1 + \mathcal{L}^{}_2 ) \big) \diff t  \\
    &   + \frac{1}{2}\int_{0}^{\uptau}  \Big( \norm{\int_{t}^{\uptau} \nabla \rho^2(s) \diff s}^2_{L_2^{}(\Omega)} +(\alpha(U_h)+1)\norm{\rho}^2_{L^{}_2(\Omega)}\Big)\diff t \\
    &  +\frac{1}{4\epsilon^{10}} \int_{0}^{\uptau} \Big(\tilde{\beta}(\theta,U_h) \norm{\int_{t}^{\uptau} \nabla \rho^2(s) \diff s}^4_{L_2^{}(\Omega)} +\tilde{\gamma}(\theta,U_h) \norm{\rho}^4_{L^{}_2(\Omega)} \Big)\diff t ,
\end{aligned}
\end{equation}
where
\[
\begin{aligned}
\tilde{\Theta}^{}_2:  =&\ \epsilon^{-4}\Big( \big(\tilde{C}_0+ 396 \norm{U^{}_h}^2_{L^{}_\infty(\Omega)}\big) \norm{\theta}^2_{L^{}_2(\Omega)} 
+ \frac{ \tilde{C}^{}_1}{2 }  \norm{\theta}^4_{L^{}_4(\Omega)} +  \tilde{C}^{}_{0} \norm{\theta}^6_{L^{}_6(\Omega)} \Big),\\
\tilde{\beta}(\theta,U_h):= &\ \frac{\tilde{C}^{}_2 \epsilon^{8}}{16}\big(  \norm{\theta}^4_{L_{\infty}(\Omega)} + \norm{U^{}_h}^4_{L_{\infty}(\Omega)} \big)  + 2\epsilon^{6} \norm{U^{}_h}^4_{L_{\infty}(\Omega)}
\\
&   +2 C^{}_{PF} \tilde{c}^4\epsilon^2 \norm{F'(U_h^{})}^4_{L_{\infty}(\Omega)} + 11 \epsilon^{10} \big(\norm{F'(U_h^{})}^4_{L_{\infty}(\Omega)} +  \norm{U^{}_h}^4_{L_{\infty}(\Omega)} +6\big), \\
\tilde{\gamma}(\theta,U_h):=&\ 324  C^{}_{PF}  \tilde{c}^4 \big(  \norm{\theta}^4_{L^{}_{\infty}(\Omega)} +\norm{U^{}_h}^4_{L^{}_{\infty}(\Omega)} \big),
\end{aligned}
\]
with $\tilde{C}_0:= ( C^{1/2}_{PF}  \tilde{c}^2+1)/2$, 
$\tilde{C}_1: = 9 + 9 C^{1/2}_{PF}  \tilde{c}^2 + 6^4 11^2 C^{}_{PF}  \tilde{c}^4$, $\tilde{C}_2:= 3^7C^{}_{PF}  \tilde{c}^4$.
\end{lemma}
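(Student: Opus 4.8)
The plan is to reproduce the energy argument of Lemma~\ref{lemma_4.4} line by line, modifying only those steps where the two-dimensional instance of \eqref{eq:GNL} is used and replacing it by its three-dimensional counterpart $\norm{w}_{L_4(\Omega)}\le\tilde c\,\norm{w}_{L_2(\Omega)}^{1/4}\norm{\nabla w}_{L_2(\Omega)}^{3/4}$. As before, the hypothesis $\rho(t)\in W^{1,4}_0(\Omega)$ ensures that the test function $\phi$ of \eqref{4.11} lies in $H^1_0(\Omega)$, so we may set $v=\phi$ in the error equation \eqref{4.7}; the Taylor expansion $F(U_h)-F(u)=-eF'(U_h)-3e^2U_h-e^3$, the two algebraic identities for $(e^2U_h,\phi)$ and $(e^3,\phi)$, and the resulting identity \eqref{4.12} are purely algebraic and hence unchanged, as are the two integration-by-parts identities for $\int_0^{\uptau}\langle\rho_t,\rho\int_t^{\uptau}\rho^2\rangle\diff t$ and $\int_0^{\uptau}(\nabla\rho,\rho\int_t^{\uptau}\nabla\rho^2)\diff t$ used at the end of the proof; Lemma~\ref{lem:ellrec_bound} is dimension-free and remains available. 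Thus the whole task reduces to re-estimating $I_1,\dots,I_{11}$ in the three-dimensional setting.

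The crucial observation is that \eqref{eq:GNL} enters the proof of Lemma~\ref{lemma_4.4} in only two ways. First, it is applied to the factor $\norm{\int_t^{\uptau}\rho^2(s)\diff s}_{L_4(\Omega)}$ occurring in $I_1^1,I_3^1,I_4^1,I_5^1,I_7^1,I_8^1,I_{10}^1,I_{6,9}^1$; there the three-dimensional bound, combined with Poincar\'e--Friedrichs on the $L_2$-factor, still yields only a \emph{first} power of $\norm{\nabla\int_t^{\uptau}\rho^2}_{L_2(\Omega)}$, namely $\norm{\int_t^{\uptau}\rho^2}_{L_4}\le C_{PF}^{1/4}\tilde c\,\norm{\nabla\int_t^{\uptau}\rho^2}_{L_2}$ in place of the two-dimensional $C_{PF}^{1/2}\tilde c\,\norm{\nabla\int_t^{\uptau}\rho^2}_{L_2}$. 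Consequently these estimates go through with the \emph{same} structure, every power of $C_{PF}^{1/2}$ being replaced by the corresponding power of $C_{PF}^{1/4}$; this is exactly what turns $C_0,C_1,C_2$ into $\tilde C_0,\tilde C_1,\tilde C_2$ and what accounts for the adjusted $\epsilon$-weights in the first three summands of $\tilde\beta$ and the passage $\Theta_2\to\tilde\Theta_2$. All the remaining half-terms $I_j^2$ (and $I_2^1$), which involve only an $L_2$-norm of $\int_t^{\uptau}\rho^2$ and a $\norm{\rho}_{L_4}$ that is absorbed directly into the favourable quantity $\norm{\rho}_{L_4(\Omega)}^4$, are dimension-free and unchanged. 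Second, \eqref{eq:GNL} is used in $I_{6,9}^2$ and in $I_{11}$ to trade a factor $\norm{\rho}_{L_4(\Omega)}^2$ for $\norm{\nabla\rho}_{L_2(\Omega)}$; in three dimensions this conversion reads $\norm{\rho}_{L_4}^2\le\tilde c^2\norm{\rho}_{L_2}^{1/2}\norm{\nabla\rho}_{L_2}^{3/2}$, i.e.\ it produces an extra half power of $\norm{\nabla\rho}_{L_2}$. To accommodate this I would first peel off one factor $\norm{\rho}_{L_4}^2$ from these terms by Young's inequality into a piece $\le\tfrac18\norm{\rho}_{L_4(\Omega)}^4$ plus a multiple of $\norm{\rho}_{L_2(\Omega)}^2$ --- this is why the left-hand side of \eqref{d=3} carries the coefficient $\tfrac18$ rather than $\tfrac14$ and why $\alpha(U_h)$ is replaced by $\alpha(U_h)+1$ --- and then apply the three-dimensional GNL on what remains, followed by Young's inequality with conjugate exponents $(4/3,4)$ to absorb the residual $\norm{\nabla\rho}_{L_2}^{3/2}$-factor into $\tfrac{\epsilon^2}{4}\norm{\nabla\rho}_{L_2(\Omega)}^2$; the raising to the fourth power inflates the $\epsilon$-weight of the $I_{6,9}^2$- and $I_{11}$-contributions from $\epsilon^{-6}$ to $\epsilon^{-10}$ and promotes the powers of $\norm{F'(U_h)}_{L_\infty}$, $\norm{\theta}_{L_\infty}$, $\norm{U_h}_{L_\infty}$ in the quartic residuals, which are collected into $\tilde\beta$ and $\tilde\gamma$. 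Summing all contributions, integrating over $(0,\uptau)$ and moving the favourable quantities to the left yields \eqref{d=3}.

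The main obstacle is purely a matter of bookkeeping: since the three-dimensional GNL systematically exchanges a power of $\norm{\nabla\cdot}_{L_2}$ for a power of $\norm{\cdot}_{L_2}$, every application of Young's inequality must be re-tuned so that, on the one hand, the favourable quantities $\norm{\rho}_{L_4(\Omega)}^4$, $\norm{\nabla\rho}_{L_2(\Omega)}^2$, $\norm{\int_t^{\uptau}\nabla\rho^2}_{L_2(\Omega)}^2$ and $\norm{\int_t^{\uptau}\nabla\rho^2}_{L_2(\Omega)}^4$ reappear with coefficients strictly below the budget available on the left of \eqref{d=3}, and, on the other hand, no residual of order higher than four in $\rho$ (in particular no $\norm{\rho}_{L_2(\Omega)}^6$) is generated --- it is precisely to enforce the latter that the preliminary peel-off into $\norm{\rho}_{L_4(\Omega)}^4$ and the larger negative power of $\epsilon$ are needed. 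Once the correct exponents are fixed the computation is entirely parallel to the two-dimensional case, so only the terms that genuinely differ need be recorded.
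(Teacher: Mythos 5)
Your overall strategy coincides with the paper's: rerun the proof of Lemma \ref{lemma_4.4} with the three-dimensional instance of \eqref{eq:GNL}, and you correctly note that all terms where GNL acts on $\norm{\int_t^{\uptau}\rho^2(s)\diff s}_{L_4(\Omega)}$ survive verbatim with $C_{PF}^{1/2}$ replaced by $C_{PF}^{1/4}$, which is what produces $\tilde C_0,\tilde C_1,\tilde C_2$ and $\tilde\Theta_2$. The genuine gap is in the only place where something new is required, namely $I^2_{6,9}$ and $I_{11}$. Your recipe --- peel off the whole factor $\norm{\rho}^2_{L_4(\Omega)}$ by Young into $\tfrac18\norm{\rho}^4_{L_4(\Omega)}$ plus a multiple of $\norm{\rho}^2_{L_2(\Omega)}$, then apply the $d=3$ GNL ``to what remains'' and absorb $\norm{\nabla\rho}^{3/2}_{L_2(\Omega)}$ by a $(4/3,4)$-Young --- does not produce the right-hand side of \eqref{d=3}. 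If you peel off $\norm{\rho}^2_{L_4(\Omega)}$ completely there is nothing left to apply GNL to, and the ``multiple of $\norm{\rho}^2_{L_2(\Omega)}$'' is $\epsilon^{-4}\zeta^2\norm{\rho}^2_{L_2(\Omega)}$ for $I^2_{6,9}$ (with $\zeta:=\norm{\theta}_{L_\infty(\Omega)}+\norm{U_h}_{L_\infty(\Omega)}$), respectively an $\epsilon^{-4}\norm{F'(U_h)}^2_{L_\infty(\Omega)}\norm{\int_t^{\uptau}\nabla\rho^2}^2_{L_2(\Omega)}$ term for $I_{11}$; neither is admissible, since the statement only allows the $\epsilon$-independent coefficients $\tfrac12(\alpha(U_h)+1)$ and $\tfrac12$ on these quadratic quantities, and an $\epsilon^{-4}$-weighted quadratic term would reappear in $\mathcal{D}_3$ and make $E_3\sim\exp(C\epsilon^{-4}T)$, defeating the purpose of the lemma. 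If instead you apply GNL to the full square, $\norm{\rho}^2_{L_4(\Omega)}\le\tilde c^2\norm{\rho}^{1/2}_{L_2(\Omega)}\norm{\nabla\rho}^{3/2}_{L_2(\Omega)}$, the $(4/3,4)$-Young you describe yields an $\epsilon^{-14}\zeta^4\norm{\rho}^6_{L_2(\Omega)}$ residual --- exactly the sixth-order term you say must be avoided, and the proposal does not explain how the peel-off removes it. Moreover, your anticipated outcome places $\norm{F'(U_h)}^4_{L_\infty(\Omega)}$ among the residuals multiplying $\norm{\rho}^4_{L_2(\Omega)}$, whereas in the statement $\tilde\gamma$ contains no $F'$-term at all.

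The missing ideas are the paper's actual treatment of these two terms. For $I^2_{6,9}$ one writes $\norm{\rho}^2_{L_4(\Omega)}=\norm{\rho}_{L_4(\Omega)}\cdot\norm{\rho}_{L_4(\Omega)}$ and applies the $d=3$ GNL to \emph{one} factor only, upgrading the leftover $\norm{\rho}^{1/4}_{L_2(\Omega)}$ to $C_{PF}^{1/4}\norm{\nabla\rho}^{1/4}_{L_2(\Omega)}$ by Poincar\'e--Friedrichs so that exactly one full power of $\norm{\nabla\rho}_{L_2(\Omega)}$ appears; two successive \emph{quadratic} Young inequalities then give $\tfrac{\epsilon^2}{2}\norm{\nabla\rho}^2_{L_2(\Omega)}+\tfrac{1}{2\epsilon^{2}}\norm{\rho}^4_{L_4(\Omega)}+81C_{PF}\tilde c^4\epsilon^{-10}\bigl(\norm{\theta}^4_{L_\infty(\Omega)}+\norm{U_h}^4_{L_\infty(\Omega)}\bigr)\norm{\rho}^4_{L_2(\Omega)}$, the $\epsilon^{-2}$-weighted quartic loss being absorbed by the term $\epsilon^{-2}\norm{\rho}^4_{L_4(\Omega)}$ already present on the left of \eqref{4.12} (together with the analogous $\tfrac{1}{2\epsilon^2}$-loss from $I^1_{6,9}$), not by the $O(1)$ budget. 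For $I_{11}$ the H\"older split itself is changed to $\epsilon^{-2}\norm{F'(U_h)}_{L_\infty(\Omega)}\norm{\rho}_{L_2(\Omega)}\norm{\rho}_{L_4(\Omega)}\norm{\int_t^{\uptau}\rho^2}_{L_4(\Omega)}$, GNL is applied to the time-integral factor (not to $\rho$), and a three-way Young yields $\tfrac12\norm{\rho}^2_{L_2(\Omega)}+\tfrac18\norm{\rho}^4_{L_4(\Omega)}+\tfrac{C_{PF}\tilde c^4}{2\epsilon^{8}}\norm{F'(U_h)}^4_{L_\infty(\Omega)}\norm{\int_t^{\uptau}\nabla\rho^2}^4_{L_2(\Omega)}$; this is the sole source of the $+1$ in $\alpha(U_h)+1$ and of the drop from $\tfrac14$ to $\tfrac18$, and it sends the $F'$-residual into $\tilde\beta$ rather than $\tilde\gamma$. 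Without these two moves the bookkeeping cannot close to the stated estimate.
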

\begin{proof}
Starting from \eqref{4.12},  we discuss only the different treatment of the terms $I^{}_j$, $j=6,9,11$; the estimation of the remaining terms is identical to the proof of Lemma \ref{lemma_4.4} and is, therefore, omitted. 
To that end, we begin by setting $\zeta(\theta,U_h):= \norm{\theta}^{}_{L_{\infty}(\Omega)} + \norm{U^{}_h}^{}_{L_{\infty}(\Omega)} $. Then,  we have
\begin{align*}
    I^1_{6,9}   \leq&\ \frac{3}{ \epsilon^{2}} \norm{\rho^3}^{}_{L^{}_{4/3}(\Omega)}\zeta(\theta,U_h)\norm{\int_{t}^{\uptau} \rho^2(s) \diff s}^{}_{L_4^{}(\Omega)} \\
    \leq & \ \frac{3\tilde{c} }{ \epsilon^{2}} \norm{\rho}^{3}_{L^{}_{4}(\Omega)} \zeta(\theta,U_h) \norm{  \int_{t}^{\uptau} \rho^2(s) \diff s}^{1/4}_{L_2^{}(\Omega)} \norm{ \nabla \int_{t}^{\uptau} \rho^2(s) \diff s}^{3/4}_{L_2^{}(\Omega)} \\
     \leq &\ \frac{3\tilde{c} C^{1/4}_{PF}}{ \epsilon^{2}}  \norm{\rho}^{3}_{L^{}_{4}(\Omega)}\zeta(\theta,U_h)\norm{  \int_{t}^{\uptau}\nabla \rho^2(s) \diff s}^{}_{L_2^{}(\Omega)} \\
     \leq& \ \frac{1}{2 \epsilon^{2}} \norm{\rho}^4_{L^{}_4(\Omega)} +  \frac{ \tilde{C}^{}_2
    }{64 \epsilon^{2}} \left(  \norm{\theta}^4_{L_{\infty}(\Omega)} + \norm{U^{}_h}^4_{L_{\infty}(\Omega)} \right) \norm{\int_{t}^{\uptau} \nabla \rho^2(s) \diff s}^4_{L_2^{}(\Omega)},
\end{align*}
using \eqref{eq:GNL} for $d=3$. Similarly, we have
\begin{align*}
     I^2_{6,9}   \leq&\ \frac{3}{ \epsilon^{2}} \norm{\rho}^2_{L^{}_{4}(\Omega)}\zeta(\theta,U_h) \norm{\rho}^{}_{L^{}_{2}(\Omega)} 
     \leq   \frac{3\tilde{c}}{ \epsilon^{2}}  \norm{\rho}^{1/4}_{L^{}_{2}(\Omega)} \norm{\nabla \rho}^{3/4}_{L^{}_{2}(\Omega)} \norm{\rho}^{}_{L^{}_{4}(\Omega)} \zeta(\theta,U_h)\norm{\rho}^{}_{L^{}_{2}(\Omega)} \\
     \leq &\ \frac{3C^{1/4}_{PF} \tilde{c}}{ \epsilon^{2}}     \norm{\nabla \rho}^{}_{L^{}_{2}(\Omega)} \norm{\rho}^{}_{L^{}_{4}(\Omega)} \zeta(\theta,U_h)\norm{\rho}^{}_{L^{}_{2}(\Omega)}\\
     \leq &\ \frac{\epsilon^2}{2} \norm{\nabla \rho}^{2}_{L^{}_{2}(\Omega)} + \frac{18C^{1/2}_{PF} \tilde{c}^2}{ \epsilon^{6}} \norm{\rho}^{2}_{L^{}_{4}(\Omega)} \left(  \norm{\theta}^{2}_{L_{\infty}(\Omega)} + \norm{U^{}_h}^{2}_{L_{\infty}(\Omega)}\right)\norm{\rho}^{2}_{L^{}_{2}(\Omega)} \\
     \leq&\  \frac{\epsilon^2}{2} \norm{\nabla \rho}^{2}_{L^{}_{2}(\Omega)} + \frac{1}{2\epsilon^{2}} \norm{\rho}^4_{L^{}_4(\Omega)} + \frac{81  C^{}_{PF} \tilde{c}^4}{ \epsilon^{10}} \left( \norm{\theta}^{4}_{L_{\infty}(\Omega)} + \norm{U^{}_h}^{4}_{L_{\infty}(\Omega)} \right) \norm{\rho}^{4}_{L^{}_{2}(\Omega)}.
\end{align*}
Likewise, using completely analogous arguments, we have
\begin{align*}
    I^{}_{11}  \leq &\ \epsilon^{-2} \norm{F'(U^{}_h)}^{}_{L^{}_{\infty}(\Omega)} \norm{\rho}^{}_{L^{}_2(\Omega)}\norm{\rho}^{}_{L^{}_4(\Omega)} \norm{\int_{t}^{\uptau}  \rho^2(s) \diff s}^{}_{L^{}_4(\Omega)}\\
     \leq &\ \frac{ C^{1/4}_{PF} \tilde{c}}{\epsilon^{2}}   \norm{F'(U^{}_h)}^{}_{L^{}_{\infty}(\Omega)} \norm{\rho}^{}_{L^{}_2(\Omega)}\norm{\rho}^{}_{L^{}_4(\Omega)} \norm{\int_{t}^{\uptau} \nabla  \rho^2(s) \diff s}^{}_{L^{}_2(\Omega)} \\
    & \leq  \frac{1}{2}  \norm{ \rho}^2_{L^{}_2(\Omega)} +  \frac{1}{8} \norm{ \rho}^4_{L^{}_4(\Omega)} +  \frac{ C^{}_{PF} \tilde{c}^4 }{ 2  \epsilon^{8}} \norm{F'(U^{}_h)}^4_{L^{}_{\infty}(\Omega)} \norm{\int_{t}^{\uptau} \nabla \rho^2(s) \diff s}^4_{L^{}_2(\Omega)}.
\end{align*}
The estimation of the remaining $I^{}_j$ on the right-hand of \eqref{4.12} are completely analogous to the two-dimensional case with the difference that one applies \eqref{eq:GNL} for $d=3$.  Collecting all the estimates, we arrive at the desirable result.
\end{proof}

\begin{remark}
In the \emph{a posteriori} error estimation literature for evolution problems, $\mathcal{L}^{}_1$ and the term $\epsilon^{-4} \norm{F(U^{}_h) - F(U^n_h)}^2_{L^{}_2(\Omega)}$ of $\mathcal{L}_2$ are often referred to as the \textit{time error estimates}, while $\norm{f^n - f}^2_{L^{}_2(\Omega)} $ is the \textit{data approximation}.  $\Theta^{}_1$ represents the \textit{mesh change} and $\Theta^{}_2$ (or $\tilde{\Theta}^{}_2$, respectively) is often termed as the \textit{spatial error estimate}. These will be  presented in detail in Section \ref{sec:fully_computable}.
\end{remark}

\begin{remark}
	We stress that the above result remains valid for the case of Neumann boundary conditions, upon modifying slightly the definition of the elliptic reconstruction \eqref{4.3} to eliminate the undetermined mode. Moreover, this can be done in such a way to recover \eqref{eq:GNL} for terms involving $\rho$. This is not done here in the interest of simplicity of the presentation only.
	\end{remark}


\subsection{Spectral estimates} To ensure polynomial dependence of the resulting estimates on $\epsilon^{-1}$, a widely used idea is to employ \emph{spectral estimates} of the principal eigenvalue of the linearized Allen-Cahn operator:
\begin{equation}\label{spectral_classical}
-\lambda^{}(t) : = {\displaystyle\inf_{v\in H^{1}_0(\Omega) \setminus \lbrace{0 \rbrace}}} \frac{\norm{\nabla v}^2_{L_2(\Omega)} + \epsilon^{-2} \left( F'(u) v,v \right)}{\norm{v}^2_{L_2(\Omega)}}.
\end{equation}
The celebrated works \cite{Chen1994,Mottoni-Schatzman1995,Alikakos-Fusco1993}  showed that  $\lambda$ can be bounded independently of $\epsilon$ for the case of smooth, evolved interfaces. This idea was used in the seminal works \cite{Feng-Prohl2003} for the proof of \emph{a priori} and \cite{Kessler-Nochetto-Schmidt2004,Feng-Wu2005} for \emph{a posteriori} error bounds for finite element methods in various norms with constants depending upon $\epsilon^{-1}$ only in a polynomial fashion. The \emph{a priori} nature of the spectral estimate \eqref{spectral_classical} is somewhat at odds, however, with the presence of $\lambda$ in \emph{a posteriori} error bounds. This difficulty was overcome in the seminal work \cite{Bartels2005} by first linearizing  about the numerical solution $U_h$, viz.,
		\begin{align}\label{spectral_apost}
-\lambda_{h}(t) := \inf_{v\in H^{1}_0(\Omega) \setminus \lbrace{0 \rbrace}} \frac{\norm{\nabla v}^2_{L_2(\Omega)} + \epsilon^{-2} \left( F'(U^{}_h) v,v \right)}{\norm{v}^2_{L_2(\Omega)}},
\end{align}
and by then proving verifiable eigenvalue approximation error bounds. The latter ensure that it is possible to compute principle eigenvalue approximations $\Lambda_h>0$, such that $\Lambda_h \ge \lambda_h$; we refer to \cite[Section 5]{Bartels2005} for the detailed construction. In short, it has been shown that for linear conforming finite element spaces, ($\kappa=1$,) it is possible to construct $\Lambda_h(t)\ge \lambda_h(t)$ for almost all $t\in(0,T]$ upon assuming that $\norm{U_h}_{L_{\infty}(\Omega)}$ remains bounded independently of $\epsilon^{-1}$.

The $\epsilon$-independence $\lambda$, (resp.~$\lambda_h$, $\Lambda_h$,) however, is \emph{not} guaranteed when the evolving interfaces are subjected to topological changes. This is an important challenge, since phase-field approaches are preferred over sharp-interface models exactly due to their ability evolve interfaces past topological changes. To address this, in \cite{Bartels-Muller-Ortner2011} (cf., also  \cite{Bartels2016,Bartels-Muller2011}) a crucial observation on the temporal integrability of $\lambda$ under topological changes was given: during topological changes we have $\lambda\sim \epsilon^{-2}$, but \emph{only} for time periods of length $\epsilon^2$. Therefore, it has been postulated that there exists an $m > 0$,  such that 
\begin{equation}\label{top_change}
 \int_{0}^{T} (\lambda(t))_+ \diff{t} \leq C + \log{(\epsilon^{-m})}
 \end{equation}
holds  for some constant $C>0$ \emph{independent} of $\epsilon$, for some $m\ge 0$; notice that for $m=0$, we return to the earlier case of no topological changes. A number of numerically validated scenarios justifying \eqref{top_change} for the scalar Allen-Cahn and its vectorial counterpart, the Ginzburg-Landau equation, can be found in \cite{Bartels-Muller-Ortner2011}. Moreover, a construction for a  $\Lambda_h\in L_1(0,T)$ such that 
\begin{equation}\label{assumption}
\int_{0}^{T} (\Lambda_h(t))_+ \diff{t}\ge \int_{0}^{T} (\lambda_h(t))_+ (t) \diff{t}, 
\end{equation}
has been provided in \cite[Proposition 3.8]{Bartels-Muller-Ortner2011}.

The above motivate the following assumption on the behaviour of the principal eigenvalue $\lambda_h$, which we shall henceofrth adopt.

\begin{assumption}\label{ass:spec_est} We postulate the validity of one of the following options:
	\begin{itemize}
		\item[(I)] we assume that the zero level set $\Gamma_t = \lbrace x \in \Omega : u(x,t) = 0 \rbrace$ is sufficiently smooth.  Then, for almost every $t\in(0,T]$, there exists a   computable bound $ \Lambda_h(t)\ge  \lambda_{h}(t) $ which is \emph{independent} of $\epsilon$.
		
		\item[(II)] there exists an $m > 0$,  such that $ \int_{0}^{T} \lambda_h(t) \diff{t} \leq C + \log{(\epsilon^{-m})}$ for some constant $C>0$ \emph{independent} of $\epsilon$ and we can construct a $ \Lambda_h\in L_1(0,T)$ such that \eqref{assumption} holds.
	\end{itemize}
	\end{assumption}

Of course, Assumption \ref{ass:spec_est}(I) is a special case of Assumption \ref{ass:spec_est}(II), arising when $m=0$. Nonetheless, when  Assumption \ref{ass:spec_est}(I) is valid, the resulting \emph{a posteriori} error estimates will have more favourable dependence on the final time $T$ than the estimates that are possible under the more general Assumption \ref{ass:spec_est}(II).

We shall prove \emph{a posteriori} error estimates under the more general Assumption \ref{ass:spec_est}(II), commenting, nevertheless, on the differences that would arise in the proof under \ref{ass:spec_est}(I) instead.


\subsection{Continuation argument}\label{sec:cont_arg}

We begin by noting that, compared to the state-of-the-art estimates of \cite{Bartels-Muller-Ortner2011,Bartels-Muller2011}, there are three additional terms on the right hand side of \eqref{4.10}, \eqref{d=3}, due to the use of the special test function \eqref{4.11}:  $\|\theta\|_{L^{}_4(0,T;L^{}_4(\Omega))}$ and $\|\theta_t\|_{L^{}_4(0,T;L^{}_4(\Omega))}$ which arise naturally and are symmetric
	with respect to the $\|.\|_{L^{}_4(0,T;L^{}_4(\Omega))}$ norm that is to be estimated, while the additional term $\|.\|_{L^{}_6(0,T;L^{}_6(\Omega))}$ can be compensated by the presence of the additional terms $A(t)$ (weighted norms) appearing on the left-hand side. Since the $L^{}_6(0,T;L^{}_6(\Omega))$-norm does not arise naturally in the Allen-Cahn energy functions, we have opted in dropping the $A(t)$ terms in the analysis below. 

Assuming that $\Lambda_h$ is available, we set $v = \rho \in H_0^1(\Omega) $ in \eqref{spectral_apost}, to deduce
\begin{equation}\label{4.15}
\begin{aligned}
    &  \norm{\nabla \rho}^2_{L^{}_2(\Omega)}  + \epsilon^{-2} \left( F'(U_h^{})  \rho, \rho \right)   \\
     \geq & - {\Lambda}_{h}(t) (1 - \epsilon^2) \norm{ \rho}^2_{L^{}_2(\Omega)} + \epsilon^2 \norm{\nabla \rho}^2_{L^{}_2(\Omega)}  + \left( F'(U_h^{})  \rho, \rho \right).
\end{aligned}
\end{equation}
For $d=2$, we work as follows. Upon setting 
\begin{equation*}\label{4.16}
\eta_2:=\bigg( \frac{1}{2}  \norm{\rho(0)}^2_{L^{}_2(\Omega)}  + \frac{C_{PF}^2}{2} \norm{\rho(0)}^4_{L_4^{}(\Omega)} + \sum_{n=1}^{N} \int_{J^{}_n} \big( \Theta^{}_1 +  \Theta^{}_2 +  C_0( \mathcal{L}^{}_1 + \mathcal{L}^{}_2 )\big) \diff t\bigg)^{\!1/4},
\end{equation*}
$
\mathcal{D}^{}_2:=  \max\{4,\alpha(U_h)+ 2{\Lambda}_{h}(t) (1 - \epsilon^2) + 2\},$ and $
\mathcal{B}^{}_2:=  \max\{ 16 \beta(\theta,U_h),\gamma(\theta,U_h) \},$
we use \eqref{4.15}  on the left-hand side of \eqref{4.10}, we note that  $-F'(U_h)\le 1$, and ignore $\int_{0}^{\uptau} A(t) \diff t$, to arrive at

\begin{align*}
	&\quad \frac{1}{4} \int_{0}^{\uptau} \norm{\rho}^4_{L^{}_4(\Omega)} \diff t +\frac{\epsilon^2}{2} \int_{0}^{\uptau} \norm{\nabla \rho}^2_{L^{}_2(\Omega)}  +  \frac{1}{8} \norm{\int_{0}^{\uptau}  \nabla \rho^2(s) \diff s}^2_{L^{}_2(\Omega)}  +  \frac{1}{2}  \norm{\rho(\uptau)}^2_{L^{}_2(\Omega)}\\
	\leq &\quad \eta_2^4    + \int_{0}^{\uptau} \mathcal{D}^{}_2(t)\Big( \frac{1}{8} \norm{\int_{0}^{\uptau} \nabla \rho^2(s) \diff s}^2_{L_2^{}(\Omega)} +\frac{1}{2}\norm{\rho}^2_{L^{}_2(\Omega)}\Big)  \diff t \\
	& +\epsilon^{-6} \int_{0}^{\uptau}  \mathcal{B}^{}_2(t)\Big( \frac{1}{64} \norm{\int_{t}^{\uptau} \nabla \rho^2(s) \diff s}^4_{L_2^{}(\Omega)} +\frac{1}{4}\norm{\rho}^4_{L^{}_2(\Omega)}\Big) \diff t\\
	\leq  & \quad \eta^4_{2}  +  \int_{0}^{T} \mathcal{D}^{}_2(t) \left( \frac{1}{8}   \norm{\int_{0}^{\uptau} \nabla \rho^2(s) \diff s}^2_{L_2^{}(\Omega)} + \frac{1}{2}  \norm{\rho}^2_{L^{}_2(\Omega)} \right) \diff t \\
	& + \frac{ \Bar{\mathcal{B}}^{}_2}{\epsilon^{6}} \sup_{t\in[0,\uptau]} \Big\{\frac{1}{8} \ \norm{\int_{0}^{\uptau}\! \nabla \rho^2(s) \diff s}^2_{L_2^{}(\Omega)}  +  \frac{1}{2}  \norm{\rho}^2_{L^{}_2(\Omega)} \Big\} \\
	& \qquad\times \bigg(   \frac{\uptau}{8}  \norm{\int_{0}^{\uptau} \!\nabla \rho^2(s) \diff s}^2_{L_2^{}(\Omega)}  +    \sup_{t\in[0,\uptau]}  \frac{\uptau}{2}  \norm{\rho}^2_{L^{}_2(\Omega)}   \bigg).
\end{align*}
where  $\Bar{\mathcal{B}}^{}_2 : = \sup_{t\in[0,T]} \mathcal{B}^{}_2(t)$.

Now, we set $ E^{}_2: = \exp \left( \int_{0}^{T} \mathcal{D}^{}_2(t) \diff{t} \right)$ and, for $d=2,3$, we use the abbreviation
\[
\begin{aligned}
\mathcal{N}_{[0,\uptau],d}(\rho):=&\ \frac{1}{4(d-1)}  \int_{0}^{\uptau}  \norm{\rho}^4_{L_4^{}(\Omega)} \diff t    + \frac{\epsilon^2}{2}  \int_{0}^{\uptau} \norm{\nabla \rho}^2_{L^{}_2(\Omega)} \diff t\\ &+\frac{1}{8}  \norm{\int_{0}^{\uptau} \nabla \rho^2(s) \diff s}^2_{L_2^{}(\Omega)} +  \sup_{t\in[0,\uptau]} \frac{1}{2}  \norm{\rho}^2_{L^{}_2(\Omega)},
\end{aligned}
\]
for the collection of semi-norms on the left-hand side of the last estimate. With this notation, we define the set
\begin{align*}
    I_2 : = \Big\lbrace \uptau\in[0,T] :  \mathcal{N}_{[0,\uptau],2}(\rho)
    \leq  4 \eta^4_2 E^{}_2   \Big\rbrace.
\end{align*}
The set $I^{}_2$ is non-empty because  $0\in I^{}_2$ and the left-hand side depends continuously on $\uptau$. We set $ \uptau^{*} := \max I^{}_2 $, and we assume that $ \uptau^{*} < T$; we aim to arrive at a contradiction. Hence, using the definition of  the set $I^{}_2$, we deduce 
\begin{align*}
    \mathcal{N}_{[0,\uptau],2}(\rho)
    \leq & \  \eta^4_2  +  \int_{0}^{\uptau} \mathcal{D}^{}_2(t) \Big( \frac{1}{8}   \norm{\int_{0}^{\uptau} \nabla \rho^2(s) \diff s}^2_{L_2^{}(\Omega)} + \frac{1}{2}  \norm{\rho}^2_{L^{}_2(\Omega)} \Big) \diff t \\
 &      +  16 \Bar{\mathcal{B}}^{}_2  \  \eta^8_2 E^{2}_2 ( T  +1)\epsilon^{-6}.
\end{align*}
If the last term on the right-hand side of the last estimate is bounded above by $ \eta^4_2$, or, equivalently, if it holds
\begin{equation}\label{4.18}
    \eta^4_2 \leq    \epsilon^{6}\big(16 \Bar{\mathcal{B}}^{}_2  (T + 1) E^{2}_2\big)^{-1},
\end{equation}
then for all $0 \leq \uptau \leq \uptau^{*} $ we  have
\begin{align*}
    \mathcal{N}_{[0,\uptau],2}(\rho)
    \leq & \ 2 \eta^4_2 +  \int_{0}^{\uptau} \mathcal{D}^{}_2(t) \Big( \frac{1}{8}   \norm{\int_{0}^{\uptau} \nabla \rho^2(s) \diff s}^2_{L_2^{}(\Omega)} + \frac{1}{2}  \norm{\rho}^2_{L^{}_2(\Omega)} \Big) \diff t.
\end{align*}
Since  $\frac{1}{8}   \norm{\int_{0}^{\uptau} \nabla \rho^2(s) \diff s}^2_{L_2^{}(\Omega)} + \frac{1}{2}  \norm{\rho}^2_{L^{}_2(\Omega)}\le  \mathcal{N}_{[0,\uptau],2}(\rho)$, Gr\"{o}nwall's Lemma implies 
\begin{align*}
   \mathcal{N}_{[0,\uptau^*],2}(\rho) \leq  2  \eta^4_2 E^{}_2,
\end{align*}
upon setting $\uptau = \uptau^{*}$. This contradicts the hypothesis $ \uptau^{*} < T $ and, therefore, proves that $ I^{}_2 = [0,T]$.

Likewise for $d=3$, we insert the spectral estimate \eqref{4.15} into \eqref{d=3}, and we work as for $d=2$. Setting 
\begin{equation*}\label{n_3}
    \eta_3:=\bigg(\frac{1}{2}  \norm{\rho(0)}^2_{L^{}_2(\Omega)}  + \frac{C_{PF}^2}{2} \norm{\rho(0)}^4_{L_4^{}(\Omega)} + \sum_{n=1}^{N} \int_{J^{}_n} \big( \Theta^{}_1 + \tilde{\Theta}^{}_2 +\tilde{C}_0( \mathcal{L}^{}_1 + \mathcal{L}^{}_2 ) \big) \diff t\bigg)^{\!1/4},
\end{equation*}
$\mathcal{D}^{}_3:=  \max\{4,\alpha(U_h) + 2{\Lambda}_{h}(t) (1 - \epsilon^2) + 3 \},$ $
\mathcal{B}^{}_3:=  \max\{ 16 \tilde{\beta}(\theta,U_h),\tilde{\gamma}(\theta,U_h) \},$ and $\Bar{\mathcal{B}}^{}_3 : = \sup_{t\in[0,T]} \mathcal{B}^{}_3(t)$,
through the same argumentation, we conclude that now the set $  I_3 : = \lbrace \uptau\in[0,T] :  \mathcal{N}_{[0,\uptau],3}(\rho)
\leq  4 \eta^4_3 E^{}_3   \rbrace$ equals $[0,T]$ upon assuming the condition  
\begin{equation}\label{4.19}
  \eta^4_3 \leq    \epsilon^{10}\big(16 \Bar{\mathcal{B}}^{}_3  (T + 1) E^{2}_3\big)^{-1}.
\end{equation}

The above argument has already confirmed the validity of the following result.


\begin{lemma} Assume that \eqref{4.18} holds when  when $d=2$ or \eqref{4.19} holds when $d=3$. Then, we have the bound
\begin{equation}\label{4.20}
  \mathcal{N}_{[0,T],d}(\rho)\le   4  \eta^4_{d} E^{}_{d}.
\end{equation}
\end{lemma}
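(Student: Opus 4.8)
The plan is to recognise that the statement is simply the clean packaging of the continuation argument that has just been carried out in the running text for $d=2$ and $d=3$; the role of the formal ``Lemma'' is to extract a usable estimate, so the proof will mostly consist of pointing back to the work already done and filling in the one remaining step. The starting point is the chain of inequalities established in Section~\ref{sec:cont_arg}: inserting the spectral estimate \eqref{4.15} into \eqref{4.10} (resp.~\eqref{d=3}), dropping the nonnegative quantity $\int_0^{\uptau} A(t)\diff t$ and using $-F'(U_h)\le 1$, one obtains, after collecting terms with the help of the quantities $\mathcal{D}_d$, $\mathcal{B}_d$ and $\bar{\mathcal{B}}_d$, the differential-type inequality
\[
\mathcal{N}_{[0,\uptau],d}(\rho)\le \eta_d^4 + \int_0^{\uptau}\mathcal{D}_d(t)\Big(\tfrac18\norm{\textstyle\int_0^{\uptau}\nabla\rho^2(s)\diff s}^2_{L_2^{}(\Omega)}+\tfrac12\norm{\rho}^2_{L_2^{}(\Omega)}\Big)\diff t + \frac{\bar{\mathcal{B}}_d}{\epsilon^{2(2d-1)}}\,\big(\text{quartic terms}\big),
\]
which is valid for every $\uptau\in[0,T]$.

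First I would define, exactly as in the text, the set $I_d:=\{\uptau\in[0,T]:\mathcal{N}_{[0,\uptau],d}(\rho)\le 4\eta_d^4 E_d\}$, observe that $0\in I_d$ and that $\uptau\mapsto\mathcal{N}_{[0,\uptau],d}(\rho)$ is continuous (each summand is an integral in $\uptau$ or an essential supremum of an $L_2$-norm that depends continuously on $\uptau$, using $\rho\in C([0,T];L_2(\Omega))$), so that $I_d$ is a nonempty closed subinterval $[0,\uptau^*]$. Then I argue by contradiction: assuming $\uptau^*<T$, on $[0,\uptau^*]$ the defining bound of $I_d$ lets us control the quartic terms, namely $\bar{\mathcal{B}}_d\epsilon^{-2(2d-1)}(\cdots)\le 16\bar{\mathcal{B}}_d\eta_d^8 E_d^2(T+1)\epsilon^{-2(2d-1)}$; invoking the smallness hypothesis \eqref{4.18} (for $d=2$) or \eqref{4.19} (for $d=3$) bounds this last quantity by $\eta_d^4$, so that for all $\uptau\le\uptau^*$,
\[
\mathcal{N}_{[0,\uptau],d}(\rho)\le 2\eta_d^4 + \int_0^{\uptau}\mathcal{D}_d(t)\,\mathcal{N}_{[0,\uptau],d}(\rho)\diff t,
\]
where I have used $\tfrac18\norm{\int_0^{\uptau}\nabla\rho^2\diff s}^2_{L_2^{}(\Omega)}+\tfrac12\norm{\rho}^2_{L_2^{}(\Omega)}\le\mathcal{N}_{[0,\uptau],d}(\rho)$. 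Gr\"onwall's Lemma (in the integral form, with the integrand $\mathcal{N}$ nondecreasing in its right endpoint) then yields $\mathcal{N}_{[0,\uptau^*],d}(\rho)\le 2\eta_d^4 E_d<4\eta_d^4 E_d$; by continuity the strict inequality persists on $[0,\uptau^*+\delta]$ for small $\delta>0$, contradicting the maximality of $\uptau^*$. Hence $\uptau^*=T$, i.e.\ $I_d=[0,T]$, which is precisely \eqref{4.20}.

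The only genuine subtlety, and what I would flag as the main point to get right, is the legitimacy of the continuation step itself: one must make sure that the map $\uptau\mapsto\mathcal{N}_{[0,\uptau],d}(\rho)$ is continuous up to $\uptau^*$ and that the a~priori regularity assumed in Lemmas~\ref{lemma_4.4}/\ref{lemma_4.5}, namely $\rho(t)\in W^{1,4}_0(\Omega)$ for a.e.\ $t$, is available on the whole of $[0,\uptau^*]$ so that those lemmas may be applied there. Both are inherited from the standing assumptions and from the structure of $\rho=\omega-u$ (with $\omega$ piecewise linear in time with values in $H_0^1\cap W^{1,4}$ and $u$ the weak solution), so the argument closes; the remaining manipulations are the elementary algebra already displayed in Section~\ref{sec:cont_arg} and need only be referenced, not repeated.
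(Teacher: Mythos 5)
Your proposal is correct and follows essentially the same route as the paper: the lemma is exactly the packaging of the continuation argument of Section \ref{sec:cont_arg} (definition of the set $I_d$, maximality of $\uptau^*$, use of \eqref{4.18}/\eqref{4.19} to absorb the quartic terms, Gr\"onwall, and contradiction with $\uptau^*<T$), which the paper states ``has already confirmed the validity'' of the result. Your added remarks on the continuity of $\uptau\mapsto\mathcal{N}_{[0,\uptau],d}(\rho)$ and on why the strict bound $2\eta_d^4E_d<4\eta_d^4E_d$ propagates past $\uptau^*$ only make explicit details the paper leaves implicit.
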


\subsection{Main results}
Now we are ready to present the main error estimate  in the $L^{}_4(0,T;L^{}_4(\Omega))$-norm, from which we can easily arrived at a fully computable \emph{a posteriori} estimate in Section \ref{sec:fully_computable}. 
\begin{theorem}\label{main_theorem}
Let  $u^{}_0 \in L^{}_{\infty}(\Omega)$ and $f\in L^{}_{\infty}(0,T;L^{}_4(\Omega))$, $\Omega \subset \mathbb{R}^d$, $d= 2,3$. Let $u$ be the solution of \eqref{2.1} and $U^{}_h$ is its approximation \eqref{3.2}. Then, under Assumption \eqref{ass:spec_est}{\rm (II)} and the condition
\begin{equation}\label{conditional_estimates}
\eta^{}_d  \leq    \big(16 (T + 1) \Bar{\mathcal{B}}^{}_d E_d^2\big)^{-1/4} \epsilon^{d-1/2} ,
\end{equation}
the following error bound holds
\begin{align}\label{4.21}
    \norm{u-U^{}_h}^{}_{L^{}_4(0,T;L^{}_4( \Omega ))} \leq  2 \eta^{}_d \left( (d-1)  E^{}_{d} \right)^{1/4} + \norm{\theta}^{}_{L^{}_4(0,T;L^{}_4( \Omega ))}.
\end{align}
\end{theorem}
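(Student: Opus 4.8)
The plan is to assemble the final error bound from the already-established ingredients: the error splitting $e=\theta-\rho$ from \eqref{error-representation}, and the bound \eqref{4.20} on $\mathcal{N}_{[0,T],d}(\rho)$ which is in force precisely under the smallness condition \eqref{4.18} (when $d=2$) or \eqref{4.19} (when $d=3$). First I would observe that the condition \eqref{conditional_estimates} stated in the theorem is nothing but the conditions \eqref{4.18}--\eqref{4.19} rewritten: raising \eqref{conditional_estimates} to the fourth power gives $\eta_d^4\le \big(16(T+1)\bar{\mathcal{B}}_d E_d^2\big)^{-1}\epsilon^{4d-2}$, and since $4d-2$ equals $6$ for $d=2$ and $10$ for $d=3$, this is exactly the hypothesis of the Lemma preceding \eqref{4.20}. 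Hence \eqref{4.20} applies.

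Next I would extract from $\mathcal{N}_{[0,T],d}(\rho)\le 4\eta_d^4 E_d$ the piece that controls the $L_4(L_4)$-norm of $\rho$. By the very definition of $\mathcal{N}_{[0,\uptau],d}$, the term $\tfrac{1}{4(d-1)}\int_0^{T}\norm{\rho}_{L_4(\Omega)}^4\diff t$ is one of the (nonnegative) summands, so
\begin{equation*}
\frac{1}{4(d-1)}\norm{\rho}_{L_4(0,T;L_4(\Omega))}^4 \le \mathcal{N}_{[0,T],d}(\rho)\le 4\eta_d^4 E_d,
\end{equation*}
whence $\norm{\rho}_{L_4(0,T;L_4(\Omega))}\le \big(16(d-1)\eta_d^4 E_d\big)^{1/4}=2\eta_d\big((d-1)E_d\big)^{1/4}$. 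Then I would apply the triangle inequality in $L_4(0,T;L_4(\Omega))$ to $u-U_h=\theta-\rho$, obtaining $\norm{u-U_h}_{L_4(0,T;L_4(\Omega))}\le \norm{\rho}_{L_4(0,T;L_4(\Omega))}+\norm{\theta}_{L_4(0,T;L_4(\Omega))}$, and substitute the bound just derived to conclude \eqref{4.21}.

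One point that needs a word of care is the regularity hypothesis $\rho(t)\in W^{1,4}_0(\Omega)$ used throughout Lemmas \ref{lemma_4.4} and \ref{lemma_4.5}: strictly this should be justified (or absorbed into the hypotheses) before the energy estimates can be invoked, e.g.\ by a density/regularisation argument or by noting $\rho=\omega-u$ with $\omega\in H^1_0$ elliptic-regular on a convex polygon and $u$ sufficiently regular; I would simply remark that this is assumed, consistently with the statements of the two lemmas. The genuinely substantive obstacle, however, is not in this final splicing step at all — it lies in the continuation argument that produced \eqref{4.20}: one must verify that $\mathcal{N}_{[0,\uptau],d}(\rho)$ depends continuously on $\uptau$ so that $I_d$ is closed and $\uptau^{*}=\max I_d$ is attained, and that the Grönwall step genuinely yields a strict improvement over the defining inequality of $I_d$ (the factor $2$ versus $4$), thereby forcing $\uptau^{*}=T$. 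Given the excerpt has already carried this out, the remaining proof of Theorem \ref{main_theorem} is the short bookkeeping described above.
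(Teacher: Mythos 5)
Your proposal is correct and follows essentially the same route as the paper: verify that \eqref{conditional_estimates} is the fourth-power rewriting of \eqref{4.18}/\eqref{4.19} (since $4(d-1/2)=6$ for $d=2$ and $10$ for $d=3$), invoke \eqref{4.20}, drop the other nonnegative terms in $\mathcal{N}_{[0,T],d}(\rho)$ to get $\norm{\rho}_{L_4(0,T;L_4(\Omega))}\le 2\eta_d\big((d-1)E_d\big)^{1/4}$, and finish with the triangle inequality on $u-U_h=\theta-\rho$. The paper's proof is exactly this bookkeeping, so no further comparison is needed.
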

\begin{proof}
Ignoring nonnegative terms on the left-hand side of \eqref{4.20}, we have
\begin{align*}
    \norm{\rho}^{}_{L^{}_4(0,T;L^{}_4( \Omega ))} \leq 2 \eta^{}_{d} \left( (d-1) E^{}_{d} \right)^{1/4};
\end{align*}
 the proof follows by a triangle inequality.
\end{proof}

\begin{remark}
	Under the more restrictive Assumption \ref{ass:spec_est}(I), the continuation argument presented in Section \ref{sec:cont_arg} remains analogous with minor alterations. Specifically, if we set $m=0$ and we replace ${\Lambda}_{h}(t)$ by $\lambda$ and  $E^{}_d = \exp \left( \int_{0}^{T} \mathcal{D}^{}_{d}(t) \diff(t) \right)$ by  $E^{}_d = \exp \left( \bar{\mathcal{D}}^{}_{d} T \right) $,  with $\bar{\mathcal{D}}^{}_{d} := \sup_{t\in[0,T]} 	\max\{4,\alpha(U_h) + 2\lambda (1 - \epsilon^2) + d \},$  $d=2,3$, 
	Theorem \ref{main_theorem} remains valid.
\end{remark}

\begin{remark}
	 We stress that Theorem \ref{main_theorem} holds also in cases whereby it is \emph{not} possible to assume that $\norm{U^{}_h}_{L^{}_{\infty}(0,T;L_{\infty}(\Omega))}$ is bounded independently of $\epsilon$. We note, however, that $\norm{U^{}_h}_{L^{}_{\infty}(0,T;L_{\infty}(\Omega))}$ remains uniformly bounded with respect to $\epsilon$ and the mesh parameters in all scenarios of practical interest we are aware of and it is typically required in scenarios ensuring the validity of Assumption \ref{ass:spec_est}.
	\end{remark}

It is instructive to discuss in detail the dependence of the various terms appearing in \eqref{conditional_estimates} and \eqref{4.21} to assess the practicality of the resulting \emph{a posteriori} error bound below. The computational challenge for $\epsilon\ll 1$ is manifested by the satisfaction of the condition \eqref{conditional_estimates}. Indeed as $\epsilon\to 0$ the condition \eqref{conditional_estimates} becomes increasingly more stringent to be satisfied, necessitating meshes to be increasingly locally fine enough so as to reduce the estimator $\eta_d$; this results to proliferation of the numerical degrees of freedom. Once $\eta_d$ is small enough, an adaptive algorithm could make use of Theorem \ref{main_theorem} for further estimation, which requires  \eqref{conditional_estimates} to be valid.

Assume for argument's sake that $\norm{U_h^n}^{}_{L_{\infty}(\Omega)} \leq C'$ for all $n=1,\ldots, N$ for some $\epsilon$-independent constant $C' > 0$.  Also, we have
\[
 \norm{\theta}^{}_{L^{}_{\infty}(0,T;L^{}_{\infty}(\Omega))} 
=  \norm{\ell^{}_{n-1}(t) \theta^{n-1} + \ell^{}_{n}(t) \theta^{n}}^{}_{L^{}_{\infty}(0,T;L^{}_{\infty}(\Omega))}
\leq \max_{n=1, \ldots, N} \norm{\theta^n}^{}_{L^{}_{\infty}(\Omega)}.
\]
The $L^{}_{\infty}(\Omega)$-norm of each $\theta^n$ will be further estimated  in Section \ref{sec:fully_computable}. For the moment, if also assume that $\norm{\theta^n}^{}_{L_{\infty}(\Omega)} \leq C'$ uniformly with respect to $\epsilon$, then we can conclude that $6\le \bar{\mathcal{B}}_d\le CC'$, $d=2,3$ and, therefore,
	\[
	3\le  2 \big((T + 1) \Bar{\mathcal{B}}^{}_d\big)^{1/4} \le C(T+1)^{1/4},
	\]
	for some generic constants $C>0$, independent of $\epsilon$, upon noting that $\sqrt[4]{6}>1.5$.

Moreover, 
	in the case of smooth developed interfaces (Assumption \ref{ass:spec_est}(I)), one expects that $E^{}_d \sim 1$ as highlighted in the classical works \cite{Chen1994, Mottoni-Schatzman1995}. When topological changes take place, we can follow \cite{Bartels-Muller-Ortner2011} and postulate that $ E^{}_d \sim \epsilon^{-m}$, $m>0$. With the above convention, we find that \eqref{conditional_estimates} becomes
	\begin{align*}
	\eta^{}_d & \leq  G_d  \epsilon^{d+(m-1)/2},
	\end{align*}
	for some constant $G_d\ge1$ for all $m\ge 0$, thus encapsulating simultaneously both cases of Assumption \ref{ass:spec_est}.

	Hence, the $\epsilon$-dependence for the condition \eqref{conditional_estimates} appears to be \emph{less} stringent than in the respective conditional \emph{a posteriori} in $L_{\infty}(L_2)$- and $L_{2}(H^1)$-norms from \cite{Bartels2005,Bartels-Muller-Ortner2011,Bartels-Muller2011}, which reads, roughly speaking, $\tilde{\eta}\le c \epsilon^{4+3m}$ for the corresponding estimator $\tilde{\eta}$ and some constant $c>0$. Therefore, seeking to prove \emph{a posteriori} error estimates for the $L_4(L_4)$-norm error is, in our view, justified, as they can be potentially used to drive space-time adaptive algorithms without excessive numerical degree of freedom proliferation. This is an significant undertaking in its own right and will be considered in detail elsewhere.
	
	The new \emph{a posteriori} error analysis appears to also improve the $\epsilon$-dependence on the condition for  $L^{}_2(H^1)$- and $L_\infty^{}(L_2)$-norm bounds compared to \cite{Feng-Wu2005, Bartels2005,Bartels-Muller-Ortner2011,Bartels-Muller2011} in certain cases. Of course, the different method of proof above results to different terms appearing in $\eta_d$ above compared to the respective conditional \emph{a posteriori} error bounds from \cite{Feng-Wu2005,Bartels2005,Bartels-Muller-Ortner2011,Bartels-Muller2011}. Therefore, the performance of the proposed estimates above has to be assessed numerically before any conclusive statements can made. In particular, we have the following result.
	\begin{proposition}[$L^{}_2(H^1)$- and $L^{}_{\infty}(L^{}_2)$-norm estimates] With the hypotheses of Theorem \ref{main_theorem} and, assuming condition \eqref{conditional_estimates},
		we have the bounds
		\begin{align*}
		\norm{u-U^{}_h}^{}_{L^{}_2(0,T;H^1_0(\Omega))} & \leq  2 \sqrt{2}  \epsilon^{-1} \eta^{2}_d  E_{d}^{1/2} + \norm{\theta}^{}_{L^{}_2(0,T;H^1_0(\Omega))},\\
		\norm{u-U^{}_h}^{}_{L^{}_{\infty}(0,T;L^{}_2(\Omega))} & \leq  2 \sqrt{2} \eta^{2}_d  E_{d}^{1/2} + \norm{\theta}^{}_{L^{}_{\infty}(0,T;L^{}_2(\Omega))}.
		\end{align*}
	\end{proposition}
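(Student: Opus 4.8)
The plan is to simply read off the two desired quantities from the left-hand side of \eqref{4.20} and then invoke the error splitting $u-U_h=\theta-\rho$ from \eqref{error-representation}. First I would observe that the hypothesis \eqref{conditional_estimates} is nothing but a repackaging of \eqref{4.18} (when $d=2$) and of \eqref{4.19} (when $d=3$): raising \eqref{conditional_estimates} to the fourth power gives $\eta_d^4\le\epsilon^{4d-2}\big(16(T+1)\bar{\mathcal{B}}_d E_d^2\big)^{-1}$, and since $4d-2=6$ for $d=2$ and $4d-2=10$ for $d=3$, this is precisely the smallness condition required by the Lemma preceding Theorem \ref{main_theorem}. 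Hence \eqref{4.20} is available, i.e.\ $\mathcal{N}_{[0,T],d}(\rho)\le 4\eta_d^4 E_d$.

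Next I would exploit that every summand in $\mathcal{N}_{[0,T],d}(\rho)$ is nonnegative. Dropping all terms except $\tfrac{\epsilon^2}{2}\int_0^T\norm{\nabla\rho}^2_{L_2(\Omega)}\diff t$ yields $\int_0^T\norm{\nabla\rho}^2_{L_2(\Omega)}\diff t\le 8\epsilon^{-2}\eta_d^4 E_d$, and taking square roots (using that $\norm{\cdot}_{H^1_0(\Omega)}$ may be taken to be $\norm{\nabla\cdot}_{L_2(\Omega)}$, or equivalently via Poincar\'e--Friedrichs) gives $\norm{\rho}_{L_2(0,T;H^1_0(\Omega))}\le 2\sqrt{2}\,\epsilon^{-1}\eta_d^2 E_d^{1/2}$. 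Similarly, retaining only the term $\sup_{t\in[0,T]}\tfrac12\norm{\rho}^2_{L_2(\Omega)}$ gives $\sup_{t\in[0,T]}\norm{\rho}^2_{L_2(\Omega)}\le 8\eta_d^4 E_d$, hence $\norm{\rho}_{L_\infty(0,T;L_2(\Omega))}\le 2\sqrt{2}\,\eta_d^2 E_d^{1/2}$.

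Finally, I would conclude with the triangle inequality applied to $u-U_h=\theta-\rho$ in each of the two norms, i.e.\ $\norm{u-U_h}_{L_2(0,T;H^1_0(\Omega))}\le\norm{\rho}_{L_2(0,T;H^1_0(\Omega))}+\norm{\theta}_{L_2(0,T;H^1_0(\Omega))}$ and the analogous inequality in $L_\infty(L_2)$, and then substitute the two bounds on $\rho$ just obtained. I do not anticipate a genuine obstacle here: the entire content was already distilled into \eqref{4.20} by the continuation argument, so this proposition is essentially a corollary obtained by discarding favourable terms and a triangle inequality; the only minor care needed is tracking the numerical constants ($8=2^3$, so the square root produces $2\sqrt{2}$) and recording that the norm on $H^1_0(\Omega)$ used throughout is the Dirichlet seminorm.
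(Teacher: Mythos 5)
Your proposal is correct and follows exactly the route the paper intends: the condition \eqref{conditional_estimates} is indeed \eqref{4.18}/\eqref{4.19} in disguise, so \eqref{4.20} applies, and the two bounds follow by discarding the other nonnegative terms in $\mathcal{N}_{[0,T],d}(\rho)$ (with the constants $8=2^3$ giving $2\sqrt{2}$, and $H^1_0$ normed by the Dirichlet seminorm) followed by the triangle inequality for $u-U_h=\theta-\rho$, precisely as in the proof of Theorem \ref{main_theorem}.
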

	Therefore, in the same setting as before, we have \eqref{conditional_estimates} implies
	\begin{align*}
	\eta^{2}_{d} \leq  G^2_d \epsilon^{2d-1 + m}.
	\end{align*}
	If we accept that $\eta^{2}_{d}\sim \tilde{\eta}$ from \cite{Bartels2005,Bartels-Muller-Ortner2011,Bartels-Muller2011}, for the sake of the argument, at least at the level of the conditional estimate, \eqref{conditional_estimates} gives formally favourable dependence on $\epsilon$ when $d=2$ and $m\ge 0$ and also when $d=3$ and $m\ge 1/2$, compared to the respective dependence 
	$\tilde{\eta}\le c \epsilon^{4+3m}$ from \cite{Bartels-Muller-Ortner2011,Bartels-Muller2011}.


\section{Fully computable upper bound}\label{sec:fully_computable}
The bound in Theorem \ref{main_theorem} is still not fully computable, due various terms involving $\theta^{}$  and $\rho(0)$, which we shall now further estimate by computable quantities.

\subsection{Initial condition estimates}
For the terms involving $\rho(0)$, we have
\begin{align*}
    \frac{1}{2}  \norm{\rho(0)}^2_{L^{}_2(\Omega)} & \leq   \norm{u^{}_0 - U^0_h}^2_{L^{}_2(\Omega)} + \norm{\theta^0_{}}^2_{L^{}_2(\Omega)}, \\
    \frac{C_{PF}^2}{2}  \norm{\rho(0)}^4_{L^{}_4(\Omega)} & \leq 4 C_{PF}^2 \big( \norm{u_0^{} - U_h^0}^4_{L_4^{}(\Omega)}  +  \norm{\theta^0_{}}^4_{L^{}_4(\Omega)} \big).
\end{align*}

 The Sobolev norms of $\theta$ appearing on $\eta_d$ can be further estimated by \emph{a posteriori} bounds for elliptic problems; see, e.g., \cite{Verfurth1996,Ainsworth-Oden2000}. We focus, therefore, in the derivation of $L_p$-norm \emph{a posteriori} error bounds for elliptic problems for $\theta$ and for $\theta_t$  via suitable duality arguments. Although the derivation is somewhat standard, we prefer to present it here with some level of detail to highlight the regularity assumptions required. Specifically, consider the dual problem: 
\begin{equation}\label{5.1}
    - \Delta z   = {\psi}^{p-1}_{}  \quad \text{in}  \ \Omega , \qquad
    z  = 0   \quad  \text{on}  \ \partial\Omega; 
\end{equation}
on an $\Omega \subset \mathbb{R}^d$ convex domain. Then, there exists a constant $C^{}_{\Omega} > 0$, depending on the domain $\Omega$, such that
\begin{align}\label{5.2}
    \norm{z}^{}_{W^{2,{p}/p-1}(\Omega)} & \leq C^{}_{\Omega} \norm{\psi^{p-1}}^{}_{L^{}_{{p}/p-1}(\Omega)} = C^{}_{\Omega} \norm{\psi}^{p-1}_{L^{}_{p}(\Omega)}, \ \text{for} \ p\ge 2;
\end{align}
we refer to \cite{Grisvard2011} for details. 


\subsection{Spatial error estimates} We shall estimate $\Theta^{}_2$ by residual-type estimators due to the presence of non-Hilbertian norms. In view of Remark \ref{remark_GO} above,  $\theta^n=w^n-U_h^n$ is the error of the elliptic problem \eqref{4.3}, so we can further estimate norms of $\theta$ once we have estimators of the form 
\[
 \norm{\theta^n}^{}_{L^{}_p(\Omega)}  \le  \mathcal{E}(U_h^{n},g_h^{n}; L^{}_p(\Omega)), 
\]
at our disposal for $p=2,4,6$. Therefore, from \eqref{4.5} we have
\begin{align*}
    \norm{\theta}^{}_{L^{}_p(\Omega)}  
    & \leq \mathcal{E}\left(U_h^{n},g_h^{n}; L^{}_p(\Omega) \right) + \mathcal{E}\left(U_h^{n-1},g_h^{n-1}; L^{}_p(\Omega) \right),
\end{align*}giving
\begin{equation}
\begin{aligned}
    \sum_{n=1}^{N} \int_{J^{}_n} \norm{\theta}^{p}_{L^{}_p(\Omega)} \diff t \leq \hat{c} \sum_{n=1}^{N} k^{}_n \left( \mathcal{E}^{p} \left(U_h^{n},g_h^{n}; L^{}_p(\Omega) \right) + \mathcal{E}^{p} \left(U_h^{n-1},g_h^{n-1}; L^{}_p(\Omega) \right)  \right), 
\end{aligned}
\end{equation}
for $\hat{c} > 0$ an algebraic constant.

Let $2\le p<+\infty$. To determine the estimator $\mathcal{E}$ precisely, we set $\psi=\theta^n$ on \eqref{5.1} and we have
\[
     \norm{\theta^{n}}^{p}_{L_p^{}(\Omega)} 
      = \int_{\Omega}  \nabla z {\cdot}  \nabla \theta^{n} \diff x - \int_{\Omega} \nabla \mathcal{I}^{n}_h z  {\cdot}  \nabla \theta^{n} \diff x
    = \int_{\Omega}  \nabla \left( z - \mathcal{I}^{n}_h z \right) {\cdot}  \nabla \left( \omega^{n} - U_h^{n}\right) \diff x 
    \]
     from Remark \ref{remark_GO}, with  $\mathcal{I}^{n}_h : W^{1,1}(\Omega), \rightarrow  V^{n}_{hk}$ denoting the standard Scott-Zhang interpolation operator that satisfies optimal approximation properties \cite{Scott-Zang1990}. Continuing in standard fashion, we have
    \begin{align*}
    \norm{\theta^{n}}^{p}_{L_p^{}(\Omega)}  =&\  \sum_{\tau \in \mathcal{T}^{n}_h} \int_{\tau} \nabla \omega^{n}  {\cdot} \nabla \left( z - \mathcal{I}^{n}_h z \right)   \diff x + \sum_{\tau \in \mathcal{T}^{n}_h} \int_{\tau} \Delta U_h^{n}  {\cdot} \left( z - \mathcal{I}^{n}_h z \right) \diff x\\
    & \ \quad - \sum_{\tau \in \mathcal{T}^{n}_h} \int_{\partial \tau} \left( \nabla U_h^{n} \cdot \vec{n} \right) \left(  z - \mathcal{I}^{n}_h z \right) \diff s  \\
     =& \sum_{\tau \in \mathcal{T}^{n}_h} \int_{\tau}r_n\left(  z - \mathcal{I}^{n}_h z \right)  \diff x -  \int_{\mathcal{S}_h^{n}} \llbracket \nabla U^{n}_h \rrbracket \left(  z - \mathcal{I}^{n}_h z \right) \diff s \\
     \leq& C_{SZ}\Big( \! \sum_{\tau \in \mathcal{T}^{n}_h} \norm{ h_n^2r_n }^{p}_{L^{}_{p}(\tau)}   \!+\!  \sum_{\varepsilon \in \mathcal{S}_h^{n}}  \norm{h^{1+1/p}_{n}\llbracket \nabla U^{n}_h \rrbracket}^{p}_{L^{}_{p}(\varepsilon)}\! \Big)^{1/p}\!\!\norm{z}^{}_{W^{2,\frac{p}{p-1}}(\Omega)},
\end{align*}
with $r_n:=\mathit{g}^{n}_{h}+ \Delta U_h^{n}$,
for some constant $C_{SZ}>0$ independent of $h_n$ and of the functions involves, using the approximation properties of $\mathcal{I}_h^n$;  here $\llbracket \nabla U^{n}_h \rrbracket$ is the \textit{jump} across the internal edge $\varepsilon$.
Then, the elliptic regularity estimate \eqref{5.2} implies that
\begin{equation*}\label{5.8}
    \mathcal{E}\left(U_{h}^{n},g_{h}^{n};L^{}_{p}(\Omega) \right) : = \ C^{}_{\Omega}C_{SZ} \Big( \! \sum_{\tau \in \mathcal{T}^{n}_h} \norm{ h_n^2r_n }^{p}_{L^{}_{p}(\tau)}   \!+\!  \sum_{\varepsilon \in \mathcal{S}_h^{n}}  \norm{h^{1+1/p}_{n}\llbracket \nabla U^{n}_h \rrbracket}^{p}_{L^{}_{p}(\varepsilon)}\! \Big)^{1/p},
\end{equation*}
the \textit{element residual} at time $t^{}_n$. 

For the limiting case $p=+\infty$,  we can take 
\begin{equation*}\label{pointwise-estimator}
\begin{aligned}
\mathcal{E}\left(U_{h}^{n},g_{h}^{n};L^{}_{\infty}(\Omega) \right) := \ C \ell^{}_{h,d}\Big( \! \sum_{\tau \in \mathcal{T}^{n}_h} \norm{ h_n^2r_n }_{L^{}_{\infty}(\tau)}   \!+\!  \sum_{\varepsilon \in \mathcal{S}_h^{n}}  \norm{h_{n}\llbracket \nabla U^{n}_h \rrbracket}_{L^{}_{\infty}(\varepsilon)}\! \Big),
\end{aligned}
\end{equation*}
with $\ell^{}_{h,d} = \left( \ln{(1/h^{}_n)} \right)^{\alpha^{}_d}$, where $\alpha^{}_2 = 2$ and $\alpha^{}_3=1$; we refer to \cite{Demlow-Georgoulis2012} for details.


\subsection{Mesh change estimates} 
The general strategy of time extensions in \eqref{4.5}, \eqref{4.6} consists in decomposing   $\theta^{}_t$ as follows
\begin{equation}\label{5.4}
    \theta^{}_t = \omega^{}_t - U^{}_{h,t} = \frac{\omega^n - \omega^{n-1}}{k^{}_n} - \frac{U^{n}_h - U^{n-1}_h}{k^{}_n}, \quad \text{for each} \  n=1,\ldots,N,
\end{equation}
with $U_h^{m} \in V^m$, $m=1,\ldots,N$. Since $V^n_{hk} \neq V^{n-1}_{hk}$in general, we define the Scott-Zang interpolation operator $\hat{\mathcal{I}}_h^{n}: H^1_0(\Omega) \rightarrow  V^n_{hk} \cap V^{n-1}_{hk}$ relative to the \textit{finest common coarsening} $\hat{\mathcal{T}}^{n}_h$ of $\mathcal{T}^{n}_h$ and $\mathcal{T}_{h}^{n-1}$. The latter allows to apply the Galerkin orthogonality property of the  \textit{elliptic reconstruction} in $V^n_{hk} \cap V^{n-1}_{hk}$. Moreover, we have the following approximation result: for all $\varepsilon \in  \check{\mathcal{S}}_h^{n} \backslash \hat{\mathcal{S}}_h^{n}$ and $1 \leq p < \infty$ it holds that
\begin{equation}\label{scott-zang}
    \norm{\psi - \hat{\mathcal{I}}_h^{n} \psi}^{}_{L^{}_{p}(\varepsilon)} \leq C^{}_{SZ} (\max_{\omega(\varepsilon)}\hat{h}_n)^{l-1/p} \norm{\psi}^{}_{W^{l,p}(\omega(\varepsilon))} \quad  \ \forall  l \leq \kappa + 1,
\end{equation}
where $\hat{h}^{}_n := \max(h^{}_n,h^{}_{n-1})$, with $\omega(\varepsilon)$ denoting the neighbourhood of elements sharing the face $\varepsilon$, where, as before, the positive constant $C^{}_{SZ}$ depends only on the shape regularity of the triangulation.

Setting $\psi=\theta_t$ on \eqref{5.1}, we work as before to deduce
\begin{align*}
 \norm{\theta^{}_t}^{p}_{L^{}_2(\Omega)}  
     =&\  k^{-1}_n \int_{\Omega} \nabla \big( z - \hat{\mathcal{I}}^n_h z \big) {\cdot} \nabla \left( \omega^n - \omega^{n-1} - U^{n}_h + U^{n-1}_h \right) \diff x \\
     =&\sum_{\tau \in \check{\mathcal{T}}^n_h} \int_{\tau} \partial \mathit{r}^{}_n \big(  z - \hat{\mathcal{I}}^n_h z \big) \diff x - \sum_{\varepsilon \in \check{\mathcal{S}}_h^{n}} \int_{\varepsilon} \partial \llbracket \nabla U^{n}_h \rrbracket \big(  z - \hat{\mathcal{I}}^n_h z \big) \diff s,
\end{align*}
with $ \check{\mathcal{S}}_h^{n}$ denoting the finite element space subordinate to the coarsest common refinement $\check{\mathcal{T}}_h^n$ of $\mathcal{T}_h^{n}$ and $\mathcal{T}_h^{n-1}$ and $\partial X_n:=(X_n-X_{n-1})/k_n$ for some sequence $\{X_n\}_{n\in \mathbb{N}\cup\{0\}}$. Standard estimation via H\"{o}lder's inequality and  \eqref{scott-zang} give, in turn,
 \begin{align*}
   \norm{\theta^{}_t}^{p}_{L^{}_{p}(\Omega)}      
      \leq &  C^{}_{SZ} \bigg( \sum_{\tau \in \check{\mathcal{T}}^n_h} \norm{ \hat{h}^{2}_n\partial \mathit{r}^{}_n}^{p}_{L^{}_{p}(\tau)}   +  \sum_{\varepsilon \in \check{\mathcal{S}}_h^{n}}  \norm{\hat{h}^{1+1/p}_n\partial \llbracket \nabla U^{n}_h \rrbracket}^{p}_{L^{}_{p}(\varepsilon)} \bigg)^{1/p}\norm{  z }^{}_{W^{2,\frac{p}{p-1}}(\Omega)}.
\end{align*}
Finally, the assumed elliptic regularity \eqref{5.2}, gives the \emph{a posteriori} error estimator
\begin{equation*}\label{5.5}
\begin{aligned}
    \hat{\mathcal{E}}\left(U_{h,t}^{},g_{h,t}^{};L^{}_{p}(\Omega) \right) := & \   C_{\Omega}C^{}_{SZ} \bigg( \sum_{\tau \in \check{\mathcal{T}}^n_h} \norm{ \hat{h}^{2}_n\partial \mathit{r}^{}_n}^{p}_{L^{}_{p}(\tau)}   +  \sum_{\varepsilon \in \check{\mathcal{S}}_h^{n}}  \norm{\hat{h}^{1+1/p}_n\partial \llbracket \nabla U^{n}_h \rrbracket}^{p}_{L^{}_{p}(\varepsilon)} \bigg)^{1/p},
\end{aligned}
\end{equation*}
for which we have $ \norm{\theta^{}_t}^{p}_{L^{}_{p}(\Omega)} \le  \hat{\mathcal{E}}^p(U_{h,t}^{},g_{h,t}^{};L^{}_{p}(\Omega) )$.

\section*{Aknowledgments}
EHG acknowledges the financial support of The Leverhulme Trust via a research project grant (grant no. RPG-2015-306). DP acknowledges the financial support of the Stavros Niarchos Foundation within the framework of project ARCHERS.

\bibliographystyle{siam}
\bibliography{bib_allen-cahn}

\begin{thebibliography}{10}

\bibitem{Ainsworth-Oden2000}
{\sc M.~Ainsworth and J.~T. Oden}, {\em A posteriori error estimation in finite
  element analysis}, John Wiley {\&} Sons, Inc., 2000.

\bibitem{Alikakos-Fusco1993}
{\sc N.~Alikakos and G.~Fusco}, {\em The spectrum of the
  {C}ahn{\textendash}{H}illiard operator for generic interface in higher space
  dimensions}, Indiana Univ. math. J., 42 (1993), pp.~pp. 637--674.

\bibitem{Allen-Cahn1979}
{\sc S.~M. Allen and J.~W. Cahn}, {\em A microscopic theory for antiphase
  boundary motion and its application to antiphase domain coarsening}, {A}cta
  {M}etall., 27 (1979), pp.~pp. 1085--1095.

\bibitem{Bartels2005}
{\sc S.~Bartels}, {\em A posteriori error analysis for time-dependent
  {G}inzburg{\textendash}{L}andau type equations}, {N}umer. {M}ath., 99 (2005),
  pp.~pp. 557--583.

\bibitem{Bartels2016}
{\sc S.~Bartels}, {\em {N}umerical {A}pproximation of {P}artial {D}ifferential
  {E}quations}, {S}pringer, 2016.

\bibitem{Bartels-Muller2011}
{\sc S.~Bartels and R.~M\"{u}ller}, {\em Quasi-optimal and robust a posteriori
  error estimates in ${L}^{\infty}_{}({L}^2_{})$ for the approximation of
  {A}llen{\textendash}{C}ahn equations past singularities}, {M}ath. {C}omput.,
  80 (2011), pp.~pp. 761--780.

\bibitem{Bartels-Muller-Ortner2011}
{\sc S.~Bartels, R.~M\"{u}ller, and C.~Ortner}, {\em Robust a priori and a
  posteriori error analysis for the approximation of {A}llen{\textendash}{C}ahn
  and {G}inzburg{\textendash}{L}andau equations past topological changes},
  {SIAM} {J}. {N}umer. {A}nal., 49 (2011), pp.~pp. 110--134.

\bibitem{Chen1994}
{\sc X.~Chen}, {\em Spectrum for the {A}llen{\textendash}{C}han,
  {C}han{\textendash}{H}illard, and phase-field equations for generic
  interfaces}, {C}omm. {PDE}s, 19 (1994), pp.~pp. 1371--1395.

\bibitem{Chrysafinos2019}
{\sc K.~Chrysafinos}, {\em Stability analysis and best approximation error
  estimates of discontinuous time-stepping schemes for the {A}llen-{C}ahn
  equation}, {ESAIM}: {M}ath. {M}odel. and {N}umer. {A}nal.,  (2019).

\bibitem{Mottoni-Schatzman1995}
{\sc P.~de~Mottoni and M.~Schatzman}, {\em Geometrical evolution of developed
  interfaces}, {T}rans. {A}mer. {M}ath. {S}oc., 347 (1995), pp.~pp. 1533--1589.

\bibitem{Demlow-Georgoulis2012}
{\sc A.~Demlow and E.~H. Georgoulis}, {\em {P}ointwise a posteriori error
  control for discontinuous galerkin methods for elliptic problems}, {SIAM}
  {J}. {N}umer. {A}nal., 50 (2012), pp.~pp. 2159--2181.

\bibitem{Du-Feng2019}
{\sc Q.~Du and X.~Feng}, {\em The phase field method for geometric moving
  interfaces and their numerical approximations},
  http://arxiv.org/abs/1902.04924v1,  (2019).

\bibitem{Feng-Wu2005}
{\sc X.~Feng and H.~jun Wu}, {\em A posteriori error estimates and an adaptive
  finite element method for the {A}llen{\textendash}{C}ahn equation and the
  mean curvature flow}, {J}. {S}cient. {C}omput., 24 (2005), pp.~pp. 121--146.

\bibitem{Feng-Prohl2003}
{\sc X.~Feng and A.~Prohl}, {\em Numerical analysis of the
  {A}llen{\textendash}{C}ahn equation and approximation for mean curvature
  flows}, {N}umer. {M}ath., 94 (2003), pp.~pp. 33--65.

\bibitem{georgoulis_lakkis_wihler}
{\sc E.~H. Georgoulis, O.~Lakkis, and T.~P. Wihler}, {\em A posteriori error
  bounds for fully-discrete hp-discontinuous galerkin timestepping methods for
  parabolic problems}, arXiv:1708.05832,  (2017).

\bibitem{Georgoulis-Makridakis2013}
{\sc E.~H. Georgoulis and C.~Makridakis}, {\em On a posteriori error control
  for the {A}llen{\textendash}{C}ahn problem}, {M}ath. {M}eth. in {A}ppl.
  {S}ciences, 37 (2013), pp.~pp. 173--179.

\bibitem{Grisvard2011}
{\sc P.~Grisvard}, {\em Elliptic problems in nonsmooth domains}, Society for
  Industrial and Applied Mathematics, 2011.

\bibitem{Kessler-Nochetto-Schmidt2004}
{\sc D.~Kessler, R.~H. Nochetto, and A.~Schmidt}, {\em A posteriori error
  control for the {A}llen{\textendash}{C}ahn problem: circumventing
  {G}ronwall's inequality}, {M2AN} {M}ath. {M}odel. {N}umer. {A}nal., 38
  (2004), pp.~pp. 129--142.

\bibitem{Lakkis-Makridakis2006}
{\sc O.~Lakkis and C.~Makridakis}, {\em Elliptic reconstruction and a
  posteriori error estimates for fully discrete linear parabolic problems},
  {M}ath. {C}omp., 75 (2006), pp.~pp. 1627--1658.

\bibitem{Makridakis-Nochetto2003}
{\sc C.~Makridakis and R.~H. Nochetto}, {\em Elliptic reconstruction and a
  posteriori error estimates for parabolic problems}, SIAM J. Numer. Anal., 41
  (2003), pp.~pp. 1585--1594.

\bibitem{Scott-Zang1990}
{\sc L.~R. Scott and S.~Zhang}, {\em Finite element interpolation of nonsmooth
  functions satisfying boundary conditions}, Mathematics of Computation, 54
  (1990), pp.~pp. 483--483.

\bibitem{Verfurth1996}
{\sc R.~Verf\"{u}rth}, {\em A review of a posteriori error estimation and
  adaptive mesh-refinement techniques}, Wiley-Teubner, 1996.

\end{thebibliography}

\end{document}